\documentclass[final,11pt]{article}
\usepackage{authblk}
\usepackage[letterpaper]{geometry}
\usepackage[utf8]{inputenc}
\usepackage[english]{babel}

\usepackage{amssymb,amsthm,amscd,latexsym,mathrsfs,units,enumerate,bm,bbm,cancel,physics,mathtools,braket}
\usepackage{enumitem}
\usepackage{color,xcolor}
\definecolor{gogreen}{rgb}{0.05,0.45,.1}
\usepackage{graphicx,wrapfig}

\usepackage[colorlinks]{hyperref}
\usepackage[notref, notcite]{showkeys}

\renewcommand{\geq}{\geqslant}
\renewcommand{\leq}{\leqslant}

\newcommand{\be}{\begin{equation}}
\newcommand{\ee}{\end{equation}}
\newcommand\ba{\begin{equation}\begin{aligned}}
\newcommand\ea{\end{aligned}\end{equation}}
\theoremstyle{definition} 

\newcommand{\LL}{L^2(\mathbb{R}^2)}

\newcommand{\LLO}{L^2(\Omega)}

\newcommand{\lv}{\left\Vert}
\newcommand{\rv}{\right\Vert}

\newcommand{\half}{\frac{1}{2}}
\newcommand{\R}{\mathbb{R}}

\newcommand{\eps}{\varepsilon}

\newcommand{\sgn}{\textrm{ sgn}}
\newcommand{\OM}{\mathbb{R}\times(-L,L)}

\theoremstyle{plain}
\newtheorem{theorem}{Theorem}[section]

\newtheorem{proposition}[theorem]{Proposition}

\newtheorem*{thm*}{Theorem}

\theoremstyle{definition} 

\newtheorem{hyp}{Hypothesis}

\theoremstyle{remark}
\newtheorem*{remark*}{Remark}

\title{Global asymptotic stability of KdV--Burgers fronts \protect\\ 
in a weakly two-dimensional model}

\author[1]{Jared~C.~Bronski\thanks{E-mail:bronski@illinois.edu}}
\author[1]{Olivia Clifton\thanks{E-mail:ocannon2@illinois.edu}}
\author[1]{Vera~Mikyoung~Hur\thanks{E-mail:verahur@illinois.edu}}
\affil[1]{Department of Mathematics, University of Illinois Urbana-Champaign, \protect\\
Urbana, IL 61801, USA}

\begin{document}

\maketitle

\begin{abstract}
We study front-type solutions of nonlinear dispersive-dissipative PDEs modeling the propagation of undular bores in a channel in two dimensions. The system extends the Korteweg--de Vries--Burgers (KdVB) equation by incorporating weak transverse motion, and admits the KdVB fronts as one-dimensional solutions. We investigate their stability under general two-dimensional perturbations. We prove that a one-dimensional front is a global asymptotic attractor in the weakly two-dimensional setting when the channel is sufficiently narrow in the transverse direction and the relative dispersion parameter lies in a range for stability to one-dimensional perturbations. Particularly, the front is unique up to spatial translations. The proof extends the energy method for temporally-modulated perturbed solutions, developed previously in the one-dimensional setting, to accommodate the transverse dynamics.
\end{abstract}

\section{Introduction}\label{sec:intro}
We study front-type solutions of the following coupled nonlinear dispersive-dissipative equations in two spatial dimensions, modeling the propagation of undular bores:
\begin{equation}\label{e:Rajopadhye}
\left\{\begin{aligned}
&u_t  + u u_x  + v_y = u_{xx} + u_{yy} + \nu u_{xxx}, \\
& v_t + u_y = v_{xx}+v_{yy}.
\end{aligned}\right.
\end{equation}
Here $t\geq0$ denotes time, and $x,y\in\mathbb{R}$ are the spatial variables; $x$ denotes the direction of wave propagation and $y$ the transverse direction; $u=u(x,y,t)$ represents the displacement of the fluid surface, and $v=v(x,y,t)$ the transverse velocity; $\nu\in\mathbb{R}$ measures the strength of dispersion relative to dissipation. 

The Korteweg--de Vries--Burgurs (KdVB) equation, after normalization of parameters, takes the form
\begin{equation}\label{e:KdVB}
u_t + u u_x = u_{xx} + \nu u_{xxx},
\end{equation}
which has long served as a fundamental mathematical model for the propagation of undular bores on a one-dimensional fluid surface, where the motion is independent of the transverse direction. See, for instance, \cite{BARKER2025} and references therein. 
One may view \eqref{e:Rajopadhye} as a weakly two-dimensional extension of \eqref{e:KdVB} that incorporates transverse motion. More specifically, the first equation in \eqref{e:Rajopadhye} is the KdVB equation for $u$, coupled to $v$ through the term $v_y$. We emphasize that \eqref{e:Rajopadhye} is a slight modification of the weakly two-dimensional model proposed by Rajopadhye \cite{Raj1996}:
\[
\left\{\begin{aligned}
&u_t  +u_x + u u_x  + v_y = u_{xx} + u_{yy} + \nu u_{xxx}, \\
&v_t + u_y = 0.
\end{aligned}\right.
\]
See also references therein. The advection term $u_x$ in the first equation can be removed by the simple change of variables $u \mapsto 1+u$ and is therefore omitted from \eqref{e:Rajopadhye}. Although the viscosity terms $v_{xx}+v_{yy}$ in the second equation of \eqref{e:Rajopadhye} do not appear in Rajopadhye's formulation, their inclusion is consistent with the spirit of the original derivation in \cite{Raj1996} and is convenient for our purposes.    

We pose \eqref{e:Rajopadhye} on the strip $\{(x,y)\in \mathbb{R}^2: |y|<L\}$ for some $L>0$, subject to the boundary conditions: 
\begin{equation}\label{e:BC}
\left\{\begin{aligned}
& u_y(x,-L,t)=0, \quad& &u_y(x,L,t)=0,\\
& v(x,-L,t)=0, & &v(x,L,t)=0.
\end{aligned}\right.
\end{equation}
That is, $u$ satisfies homogeneous Neumann boundary conditions while $v$ satisfies homogeneous Dirichlet boundary conditions in the transverse direction. Since undular bores are commonly observed in rivers and channels, it makes sense to work on strip-like domains with finite transverse extent. As we shall see, the strip width $2L$ plays an important role in our analysis. 
We supplement \eqref{e:Rajopadhye} and \eqref{e:BC} with the initial conditions: 
\begin{equation}\label{e:IC}
u(x,y,0)=u_0(x,y)\quad\text{and}\quad v(x,y,0)=v_0(x,y).
\end{equation}

We assume that \eqref{e:Rajopadhye}, \eqref{e:BC}, and \eqref{e:IC} are well-posed in the $H^3$ space setting. Rajopadhye \cite{Raj1996} established well-posedness for the equations considered there. We expect that the same line of argument can be adapted to accommodate the additional viscosity terms in the second equation of \eqref{e:Rajopadhye} and the strip-like domain. We omit the details here. 

Clearly, if $u(x,t)$ is a solution of \eqref{e:KdVB}, then $(u(x,y,t),v(x,y,t)) = (u(x,t),0)$ is a solution of \eqref{e:Rajopadhye} and \eqref{e:BC}. 
Notably, \eqref{e:KdVB} admits traveling front solutions of the form $u(x,t)=\phi(x-ct)$, where 
\[
-c\phi_x+\phi\phi_x=\phi_{xx}+\nu \phi_{xxx}, \qquad \phi(x)\to \phi_\pm \quad\text{as $x\to \pm \infty$},
\]
for $\phi_\pm$ satisfying $\phi_->\phi_+$ and $c=\frac{\phi_- + \phi_+}{2}$. Accordingly, $(u(x,y,t),v(x,y,t)) = (\phi(x-ct),0)$ makes a one-dimensional solution of \eqref{e:Rajopadhye} and \eqref{e:BC}. 
The traveling front solutions of \eqref{e:KdVB} considered in \cite{BARKER2025} satisfy $\phi_\pm=\mp1$ so that $c=0$, and the fronts are stationary. It is elementary that \eqref{e:KdVB} can then be transformed by a suitable change of variables to admit traveling front solutions for arbitrary $\phi_->\phi_+$ with $c=\frac{\phi_- + \phi_+}{2}$. It therefore suffices to consider 
\begin{equation}\label{e:KdVB0}
\frac12(\phi^2-1)=\phi_x+\nu \phi_{xx}, \qquad \phi(x)\to \mp1 \quad\text{as $x\to \pm \infty$}.
\end{equation}

The existence of stationary front solutions of \eqref{e:KdVB}, satisfying \eqref{e:KdVB0}, has been well established. See, for instance, \cite{BS1985}. We also recall that the profile is monotone if and only if $|\nu|\leq 1/4$. Their global asymptotic stability to general one-dimensional perturbations has recently been established for $|\nu| \in [0,1/4]\bigcup [0.2533,3.9]$, combining rigorous analysis for monotone profiles with validated numerics for non-monotone profiles \cite{BARKER2025}. See also \cite{chen2026} for a more recent ``pen-and-paper'' proof for $|\nu|\in[1/4,1/2]$, and \cite{KangVasseur2017} for a similar result for a more general convex flux in the absence of dispersion. Together, stability under one-dimensional perturbations is known for $|\nu|\leq 3.9$. Our objective here is to assess whether these one-dimensional fronts remain stable when regarded as solutions of \eqref{e:Rajopadhye} subject to arbitrary bounded two-dimensional perturbations.

Transverse dynamics of coherent structures in higher dimensions have been studied for a variety of inviscid fluid models. 
For example, stability and transverse instability have recently been studied extensively for front and roll-wave solutions of the inviscid St. Venant equations in both strip-like domains and the whole plane \cite{yang2025multidimensional}. When the Froude number satisfies $F<2$, hydraulic shocks are universally stable in both settings. When $F>2$, on the other hand, the stability picture is considerably richer: the parameter space contains regions of stability and transverse instability, the latter giving rise to two-dimensional herringbone patterns. Transverse stability and instability have also been established for roll and solitary waves in planar extensions of the Korteweg--de Vries equation, most notably the Kadomtsev--Petviashvili equation and its generalizations. See, for instance, \cite{bhavna2022transverse,haragus2017transverse,johnson2010transverse,spektor1988transverse}. 

Significant recent progress has likewise been made on the stability of multidimensional shock waves in viscous fluid models without dispersion. One effective strategy is to establish an $L^2$ contraction by combining a Poincar{\'e}-type inequality with a change of variables that maps the coordinate in the direction of wave propagation onto a bounded interval \cite{kang2025multiDburgers,wangNSviscous2024}. Such a change of variables is possible for viscous conservation laws whose shock profile is monotone in the direction of propagation. More recently, this approach has been extended to include dispersion in the one-dimensional case, yielding stability of the front solutions of \eqref{e:KdVB} for $|\nu|\in[1/4,1/2]$, which includes non-monotone profiles \cite{chen2026}. To overcome the non-monotonicity of the profile, \cite{chen2026} constructed a rather delicate bound on the front derivative $\phi_x$ in terms of simple algebraic functions. This allows them to make the desired change of variables and establish a Poincar{\'e}-type inequality on each interval over which the profile is monotone. These local estimates are then assembled to obtain a global bound. 

Despite these developments, there are comparatively few results on the stability and instability of higher-dimensional fronts or shock waves when nonlinearity, dissipation, and dispersion are all accounted for. The present work addresses this gap for \eqref{e:Rajopadhye}.

In Section \ref{s:stab}, we prove that $(\phi,0)$, a one-dimensional front solution of \eqref{e:Rajopadhye} and \eqref{e:BC}, is globally asymptotically stable in the $L^p$ space setting for $p\in(2,\infty)$ if the strip width is sufficiently small, assuming that the one-dimensional Schr\"odinger operator $-\frac{d^2}{dx^2}+\frac12\phi_x(x)$ has exactly one negative eigenvalue, denoted by $\lambda_{\min}$. More specifically, $L<L_{\max}:=\frac{\pi}{2\sqrt{-\lambda_{\min}}}$. Numerical computations indicate $2L_{\max}\approx10$, with a mild but nontrivial dependence on $\nu$. As in \cite{BARKER2025}, we impose no smallness assumption on the initial perturbation. It is in this sense that our stability result is global. Furthermore, our result implies uniqueness of $(\phi,0)$ up to translations in the $x$-direction. In Section \ref{s:decay}, under a slightly stronger localization assumption on the initial condition in the $x$-variable, we extend the stability result to $p\in(1,2]$ and derive algebraic decay rates in time.  

\paragraph{Energy method for global asymptotic stability.} 
The proof of Theorem \ref{t:p>2} is based on the energy method for the perturbation of the stationary front $(\phi,0)$ in $L^2(\R\times[-L,L])\times L^2(\R\times[-L,L])$. We introduce a time-dependent modulation of the front, denoted by $x_0(t)$, and write 
\[
u(x,y,t)=\phi(x-x_0(t))+w(x,y,t),
\]
where 
\[
\frac{dx_0}{dt} = -\gamma \int_{-L}^L\int_{-\infty}^\infty \phi_x(x-x_0(t))w(x,y,t)~dxdy
\]
for some constant $\gamma>0$ to be specified during the course of the proof. This equation for $x_0$ can be interpreted as some gradient flow dynamics for the squared $L^2$ distance between the front and the perturbed solution. Alternatively put, at each time, $x_0(t)$ selects a translate of the front that ``best'' tracks the perturbed solution. 

The $L^2$ energy of the perturbation then satisfies
\[ 
\half\frac{d}{dt}(\lv w \rv_{L^2} ^ {2} + \lv v \rv_{L^2}^{2}) =  \iint - \varepsilon(|\nabla w| ^ { 2 } + |\nabla v| ^ { 2 }) - \langle (w,v) , \mathcal{H}(w,v) \rangle
\]
for $\varepsilon>0$ sufficiently small (see \eqref{e:step1}), where 
\[ 
\mathcal{H}(w,v) = \Big(-(1-\varepsilon)\Delta w + \frac{1}{2}\phi_x(x)w + \gamma \mathcal{P}_{\phi_x} w, -(1-\varepsilon)\Delta v\Big). 
\] 
Here $\mathcal{P}_{\phi_x}$ denotes the rank-one projection operator onto the span of $\phi_x$ in $L^2(\mathbb{R}\times [-L,L])$. Importantly, $\lv w(t)\rv_{L^2}^2+\lv v(t)\rv_{L^2}^2$ is non-increasing provided that $\mathcal{H}$ is positive semi-definite, that is, $\langle(w,v),\mathcal{H}(w,v)\rangle\geq0$.

The idea of time-dependent modulation was recently introduced in \cite{BARKER2025} for \eqref{e:KdVB}, where the corresponding $L^2$ energy of a one-dimensional perturbation is shown to be non-increasing in time provided that 
\[
\mathcal{H}^{(1)}:=-\frac{d^2}{dx^2}+ \frac { 1 } { 2 } \phi _ { x }(x) + \gamma \mathcal{P}^{(1)}_{\phi_x},
\]
the one-dimensional counterpart of $\mathcal{H}$, is positive semi-definite. Here $\mathcal{P}^{(1)}_{\phi_x}$ denotes the projection operator onto $\phi_x$ in $L^2(\mathbb{R})$. It is elementary that $-\frac{d^2}{dx^2}+\frac12\phi_x(x)$, without the rank-one perturbation by $\mathcal{P}^{(1)}_{\phi_x}$, has at least one negative eigenvalue because $\int\phi_x(x)~dx<0$. It was then shown in \cite{BARKER2025} that $\mathcal{H}^{(1)}$ becomes positive semi-definite for $\gamma>0$ sufficiently large if $-\frac{d^2}{dx^2}+\frac12\phi_x(x)$ has exactly one negative eigenvalue. This ultimately yields global asymptotic stability for \eqref{e:KdVB}. Additionally, the spectral condition was verified for $|\nu| \in [0, 1/4]$ through rigorous analysis as well as for $|\nu|\in [0.2533, 3.9]$ through validated numerics. 

We show that $\mathcal{H}$ is positive semi-definite if the strip width $2L$ is sufficiently small, for $\gamma>0$ sufficiently large, using the spectral information of $\mathcal{H}^{(1)}$, as long as $-\frac{d^2}{dx^2}+\frac12\phi_x(x)$ has exactly one negative eigenvalue. Consequently, $\lv w(t)\rv_{L^2}^2+\lv v(t)\rv_{L^2}^2$ is non-increasing. More precisely, 
\[
L < L_{\text{\rm max}} := \frac{\pi}{2\sqrt{-\lambda_{\min}}},
\]
where $\lambda_{\min}$ is the unique negative eigenvalue of $-\frac{d^2}{dx^2} + \frac{1}{2}\phi_x(x)$. 

 The restriction on $L$ appears to be a limitation of our argument rather than a sharp stability threshold. Indeed, numerical experiments suggest that one-dimensional fronts remain stable in the weakly two-dimensional setting for strip widths substantially larger than those covered by our theorem. This is consistent with Rajopadhye's stability result for monotone fronts under small perturbations on the whole plane $\mathbb{R}^2$ \cite{Raj1996}, which suggests that monotone profiles may remain stable for arbitrary $L$.  

Before the time-dependent modulation was introduced in \cite{BARKER2025}, stability arguments for fronts in related models typically formulated an energy estimate for an antiderivative of the perturbation rather than for the perturbation itself. For example, Pego \cite{Pego} developed this approach for \eqref{e:KdVB}, showing that the antiderivative of the perturbation, denoted by $W$, satisfies an energy identity of the form
\[
\half \frac{d}{dt} \| W \|_{L^2}^2 = -\int W_x^2 + \half \int \phi_xW^2 - \int W^2W_x.
\]
When the profile is monotone, the contribution $\half\int\phi_xW^2$ is non-positive. By showing that the remaining cubic term can be controlled if $\|W\|_{L^\infty}$ is sufficiently small, \cite{Pego} proved stability of monotone profiles under sufficiently small, mean-zero perturbations. Related arguments employing antiderivatives, all of which essentially rely on the monotonicity of the profile, appeared in \cite{ACH2014,Engler,goodman1989stability,Wang}, addressing stability and related questions. Khodja \cite{Khodja} and Naumkin and Shishmarev \cite{naumkinShishmarev} extended this framework perturbatively to weakly non-monotone profiles. Particularly, \cite{naumkinShishmarev} introduced a modified weighted energy for an antiderivative of the perturbation for \eqref{e:KdVB}, showing that
\[ 
\frac{d}{dt} ( \lv W \rv_{L^2}^2 + \varepsilon \lv x^2 W \rv_{L^2}^2  )  \leq 0
\]
for $\varepsilon>0$ sufficiently small. They further showed that $\lv W(t) \rv_{L^2}^2 + \varepsilon \lv x^2 W(t) \rv_{L^2}^2 \to 0 $ as $t\to\infty$ for $|\nu|$ sufficiently close to the monotonicity threshold. Rajopadhye \cite{Raj1996} took Pego's antiderivative approach to prove the stability of one-dimensional monotone fronts under small perturbations in a weakly two-dimensional model posed on $\mathbb{R}^2$.

The strategy introduced in \cite{BARKER2025} and adopted here differs fundamentally in that it works directly with the perturbation of a time-modulated front. This approach has two main advantages. First, it is not perturbative in the parameter $\nu$, and therefore applies over a substantially larger parameter range that includes genuinely oscillatory profiles. Second, it imposes no smallness assumption on the initial perturbation, yielding a global stability result. This also implies uniqueness up to translation in the relevant function spaces. We note that recent $L^2$ contraction arguments for multidimensional viscous shocks \cite{kang2025multiDburgers,wangNSviscous2024} and for one-dimensional dissipative-dispersive fronts \cite{chen2026} similarly introduce a time-dependent translation in order to work directly with the perturbation rather than its antiderivative. 

\section{Global asymptotic stability in \texorpdfstring{$L^p$}{} for \texorpdfstring{$p \in (2,\infty)$}{}}\label{s:stab}

We establish the global asymptotic stability of a KdVB front within \eqref{e:Rajopadhye} and \eqref{e:BC} under the $L^p$ norms for $p \in (2,\infty)$. 

We begin by stating the well-posedness hypothesis and the main stability theorem.

\begin{hyp}[Well-posedness]\label{hyp:1}

Let $(u, v)$ denote a solution of \eqref{e:Rajopadhye} and \eqref{e:BC}. We assume that 
 \begin{align*}
 &w(x,y, t) := u(x,y,t) - \phi(x- x_0(t)) \in H^3(\R \times [-L,L]),\\
 &v(x,y, t) \in H^3(\R \times [-L,L]), 
 \end{align*}
for $t\in(0,\infty)$, where $\phi$ is a solution to \eqref{e:KdVB0} and $x_0$ is as in \eqref{e:xi_0} below.
\end{hyp}

\begin{theorem}[Global asymptotic stability]\label{t:p>2}
Let $(u, v)$ denote a solution of \eqref{e:Rajopadhye} and \eqref{e:BC}, and $\phi$ a solution of \eqref{e:KdVB0}. Suppose that $w(x,y,t):=u(x,y,t) - \phi(x - x_0(t))$ and $v(x,y,t)$ satisfy Hypothesis \ref{hyp:1}, where 
\begin{equation}\label{e:xi_0}
\frac{dx_0}{dt} = -\gamma \iint_{\R\times[-L,L]} \phi_x(x-x_0(t))w(x,y,t)~dxdy 
\end{equation}
for some constant $\gamma>0$ to be be specified.
Suppose 
\begin{equation}\label{e:stability}
\text{$-(1-\epsilon)\frac{d^2}{dx^2}+\frac12\phi_x(x)$ has exactly one negative eigenvalue}
\end{equation}
for some $\epsilon>0$ sufficiently small, denoted by $\lambda_{\min}$. Let $L < L_{\max} := \frac{\pi}{2\sqrt{|\lambda_{\min}|}}$.  Then 
\[
\|w(t)\|_{L^p} + \| v(t)\|_{L^p} \to 0 \quad \text{as $t \to \infty$} \quad \text{for $p\in(2,\infty)$}.
\]
\end{theorem}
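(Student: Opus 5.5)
We work in the frame moving with the modulated front, $\xi=x-x_0(t)$, and compute the $L^2$ energy identity for $(w,v)$. Substituting $u=\phi(\xi)+w$ into \eqref{e:Rajopadhye} and using \eqref{e:KdVB0}, $w$ satisfies
\[
w_t - \dot x_0(\phi_x + w_\xi) + (\phi w)_\xi + w w_\xi + v_y = \Delta w + \nu w_{\xi\xi\xi},
\qquad v_t - \dot x_0 v_\xi + w_y = \Delta v .
\]
We multiply the first equation by $w$ and the second by $v$, integrate over $\R\times[-L,L]$, and use the boundary conditions \eqref{e:BC}.

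\textbf{Energy identity.} The terms $\dot x_0 w_\xi w$, $\dot x_0 v_\xi v$, $ww_\xi w$ and $\nu w_{\xi\xi\xi}w$ integrate to zero. Because $u_y=0$ and $v=0$ on $y=\pm L$, we have $\int v_y w+\int w_y v=\int (vw)_y=0$, so the coupling cancels. The remaining pieces are $\int(\phi w)_\xi w=\tfrac12\int\phi_\xi w^2$ and $-\dot x_0\int\phi_\xi w=\gamma(\int\phi_\xi w)^2$. Together these give
\[
\tfrac12\tfrac{d}{dt}(\|w\|^2+\|v\|^2)=-\varepsilon(\|\nabla w\|^2+\|\nabla v\|^2)-\langle (w,v),\mathcal H(w,v)\rangle ,
\]
with $\mathcal H$ as in the introduction and $\mathcal P_{\phi_x}$ read with the unnormalized weight $\gamma\langle\phi_x,\cdot\rangle\phi_x$.

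\textbf{Positivity of $\mathcal H$.} The $v$-block is trivially nonnegative. For the $w$-block we expand $w$ in the Neumann cosine basis in $y$: $\cos(k\pi(y+L)/(2L))$ for $k\ge0$, with transverse eigenvalues $\mu_k=(k\pi/2L)^2$. The $k=0$ mode is $y$-independent and is the only mode seeing the projection, since $\phi_x$ is $y$-independent. On that mode the form is $2L$ times the one-dimensional form of $\mathcal H^{(1)}$ with $(1-\varepsilon)$ and $\gamma$ replaced by $2L\gamma$. By \eqref{e:stability} and the argument of \cite{BARKER2025}, this is positive semi-definite for $\gamma$ large; choosing $\gamma$ slightly larger gives a coercive margin on $\phi_x^\perp$-type directions.

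For $k\ge1$ the form is $-(1-\varepsilon)\partial_\xi^2+\frac12\phi_x+(1-\varepsilon)\mu_k\ge \lambda_{\min}+(1-\varepsilon)\mu_1$. This is positive because $\mu_1=\pi^2/(4L^2)>|\lambda_{\min}|$ when $L<L_{\max}$, after absorbing $\varepsilon$ (shrinking $\varepsilon$ if needed). Hence $E(t)=\|w\|^2+\|v\|^2$ is non-increasing and
\[
\int_0^\infty \varepsilon(\|\nabla w\|^2+\|\nabla v\|^2)\,dt\le E(0)/2<\infty .
\]

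\textbf{From dissipation to $L^p$ decay.} Let $f(t)=\|\nabla w\|^2+\|\nabla v\|^2$. We will show $f'(t)\le C$ uniformly, so that $f\in L^1$ forces $f(t)\to0$. The bound on $f'$ comes from an $H^1$-level energy estimate for $(w,v)$, closed using the Hypothesis~\ref{hyp:1} regularity together with bounds uniform in time. This requires controlling the cubic term $\int w w_\xi \Delta w$ and the dispersive term. We would handle the cubic term by Gagliardo--Nirenberg on the strip with the bounded $L^2$ norm, and we may need $H^2$ control of this kind as well. We then interpolate via the two-dimensional Gagliardo--Nirenberg inequality on the strip,
\[
\|w\|_{L^p}\le C\|w\|_{L^2}^{2/p}\|w\|_{H^1}^{1-2/p},
\]
valid for $2<p<\infty$. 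Since $\|w\|_{L^2}$ is bounded and $\|\nabla w\|_{L^2}\to0$, while the bounded width lets the $y$-direction be absorbed into the constant, this gives $\|w\|_{L^p}\to0$, and likewise for $v$.

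\textbf{Main obstacle.} The $L^2$ monotonicity is algebraic once $\gamma$ and $\varepsilon$ are chosen. The hard step is upgrading the time-integrability of $f$ to pointwise decay. This needs uniform-in-time higher-norm bounds on large, non-small perturbations with the cubic nonlinearity, and $\dot x_0$ is only bounded through $\|w\|_{L^2}$. If the $H^1$ estimate does not close, the first fallback is a Barbalat-type argument: use only uniform continuity of $t\mapsto\|\nabla w\|^2$ in a weaker norm, together with interpolation.
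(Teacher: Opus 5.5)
Your energy identity and the positivity of $\mathcal H$ via the Neumann cosine expansion match the paper's Step~1, and you track the factor $1-\varepsilon$ on $\mu_1$ correctly. There is, however, a genuine gap at the step you flag yourself.

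The claim $f'\le C$ uniformly presupposes $\sup_t\|\nabla w(t)\|_{L^2}<\infty$. Neither Hypothesis~\ref{hyp:1}, which gives regularity only at each fixed $t$, nor the $L^2$ estimate supplies this, so ``closed using bounds uniform in time'' is circular. Your Barbalat fallback needs the same uniform control. What is missing is the explicit $H^1$ estimate together with a way to handle its superlinear growth. When you differentiate $f$:
\begin{itemize}
\item the dispersive terms vanish, by integration by parts using $w_{xy}=0$ on $y=\pm L$;
\item the coupling terms cancel;
\item the linear and modulation terms are $O(\|\nabla w\|_{L^2}+\|\nabla w\|_{L^2}^2)$, since $\|w\|_{L^2}$ and $|\dot x_0|$ are bounded;
\item the cubic term equals $-\tfrac12\iint(w_x^3+w_xw_y^2)$, which the strip Gagliardo--Nirenberg inequality bounds by $C\|\Delta w\|_{L^2}\|\nabla w\|_{L^2}^2+C\|\nabla w\|_{L^2}^3$.
\end{itemize}
The first piece of the cubic bound is absorbed by the parabolic dissipation $-\|\Delta w\|_{L^2}^2$. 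This yields $\tfrac12 f'\le C_1f^{1/2}+C_2f+C_3f^{3/2}+C_4f^2$. Because of the $f^2$ term, this does not give $f'\le C$ on its own; it has to be combined with $f\in L^1(0,\infty)$. The paper multiplies by $f^{1/2}$ and integrates from a time $T$ with $f(T)<\delta$ and $\int_T^\infty f<\delta$, which forces $\sup_{t>T}f\to 0$ as $\delta\to0$. Alternatively, writing $f'\le C+C(1+f)f$, the uniform Gronwall lemma gives $\sup_{t\ge 1}f<\infty$, and then your one-sided Barbalat argument goes through.

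The interpolation step is also wrong as written. With the inhomogeneous norm, $\|w\|_{H^1}\ge\|w\|_{L^2}$ is merely bounded, so $\|w\|_{L^2}^{2/p}\|w\|_{H^1}^{1-2/p}$ need not tend to zero. The homogeneous inequality $\|w\|_{L^p}\le C\|w\|_{L^2}^{2/p}\|\nabla w\|_{L^2}^{1-2/p}$ that your conclusion actually relies on is false on the Neumann strip for $p>2$: for $w(x,y)=h(x/\lambda)$, the ratio of the left side to the right side grows like $\lambda^{1/2-1/p}$. The bounded width does not let you absorb the $y$-direction into the constant; it makes large scales effectively one-dimensional. This is exactly why the paper proves Proposition~\ref{prop:G-N_strip}. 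Extending $w$ across $y=\pm L$ by a stretched reflection with a cutoff over a region of width of order $RL$ produces a lower-order term with coefficient $(2R+1)^{1-\theta}R^{-\theta}$, which is small for $\theta>1/2$ and $R$ large. This gives $\|w\|_{L^p}\to0$ for $p>4$, and interpolation with the bounded $L^2$ norm covers $p\in(2,4]$. A correct alternative repair is to split $w$ into its $y$-average $\bar w(x)$ and the remainder: one-dimensional Gagliardo--Nirenberg gives $\|\bar w\|_{L^p}\lesssim\|w\|_{L^2}^{1/2+1/p}\|\nabla w\|_{L^2}^{1/2-1/p}$, while Poincar\'e in $y$ plus Sobolev embedding gives $\|w-\bar w\|_{L^p}\lesssim\|\nabla w\|_{L^2}$. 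Your treatment of $v$ via the Dirichlet condition is fine.
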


It is noteworthy that \eqref{e:stability} holds for $|\nu| \in [0, 1/4] \cup [0.2533, 3.9]$ \cite{BARKER2025}. 

We divide the proof of Theorem \ref{t:p>2} into three steps:
\begin{itemize}
    \item[$\bullet$] Step 1: $\displaystyle \int_0^\infty ( \lv \nabla w(t) \rv_{L^2}^2 + \lv \nabla v(t) \rv_{L^2}^2 )~dt<\infty$. 
    \item[$\bullet$] Step 2: $  \lv \nabla w(t) \rv_{L^2}^2 + \lv \nabla v(t) \rv_{L^2}^2 \to 0$ as $t\to \infty.$
    \item[$\bullet$] Step 3: $\lv w(t)\rv_{L^p} + \lv v(t)\rv_{L^p} \to 0$ as $t \to \infty$ for $p\in(2,\infty)$.
\end{itemize}

Step 1 follows from an energy estimate 
\[
\frac{d}{d t}(\lv w \rv_{L^2}^2 + \lv v\rv_{L^2}^2) \leq -C (\lv \nabla w \rv_{L^2}^2 + \lv \nabla v \rv_{L^2}^2)
\]
for some constant $C>0$. This implies the boundedness of $\lv w(t) \rv_{L^2}$ and $\lv v(t) \rv_{L^2}$, together with the integrability of $\lv \nabla w(t) \rv_{L^2}^2 + \lv \nabla v(t) \rv_{L^2}^2 $ for $t\in(0,\infty)$. The latter is \emph{almost} sufficient to show that $\lv \nabla w(t) \rv_{L^2}^2$, $\lv \nabla v(t) \rv_{L^2}^2 \to 0$ as $t\to\infty$, but not quite. One must exclude, for example, infinitely many $O(1)$ excursions from $0$ whose durations shorten sufficiently rapidly. 

Step 2 rules out such behavior by constructing a differential inequality for $\zeta(t):=\lv \nabla w(t) \rv_{L^2}^2 + \lv \nabla v(t) \rv_{L^2}^2 $. More precisely, we show that $\frac{d\zeta}{dt}$ is majorized by a finite sum of powers of $\zeta$. This prevents $\zeta$ from increasing arbitrarily rapidly once it becomes small, so that $\zeta(t) \to 0$  as $t\to \infty$. 

Step 3 focuses on turning the decay of $\lv \nabla w(t) \rv_{L^2}^2 + \lv \nabla v(t) \rv_{L^2}^2 $, along with the boundedness of $\lv w(t) \rv_{L^2}$ and $\lv v(t) \rv_{L^2}$, into a decay of $\lv w(t) \rv_{L^p}$ and $\lv v(t) \rv_{L^p}$. 
For $v$, which satisfies homogeneous Dirichlet boundary conditions in the $y$ variable, this follows immediately from the Poincar{\'e} inequality. On the other hand, for $w$, satisfying homogeneous Neumann boundary conditions in $y$, additional care is required. The standard Gagliardo--Nirenberg inequality would lead to an estimate of the form 
\[
\lv w \rv_{L^p} \leq C\lv w \rv_{L^2}^\theta \lv \nabla w \rv_{L^2}^{1-\theta},
\]
which would give the desired decay of the $L^p$ norm. Unfortunately, the standard Gagliardo--Nirenberg inequality does not directly apply because $w$ is defined only on a strip.  
Extending $w$ by zero across $y=\pm L$ does not produce an $H^1(\mathbb{R}^2)$ function because the boundary values need not vanish. A smooth extension to $\mathbb{R}^2$ must instead taper the boundary values to zero over a larger transverse region. This introduces an error term in the Gagliardo--Nirenberg inequality proportional to $\lv w \rv_{L^2}$. 

In Proposition \ref{prop:G-N_strip}, we carry out such a construction explicitly and derive a version of the Gagliardo--Nirenberg inequality adapted to a strip. By taking the width $R$ of the transverse extension region sufficiently large and choosing the exponent $\theta$ in the Gagliardo--Nirenberg inequality appropriately, the coefficient of the additional $\lv w\rv_{L^2}$ term can be made arbitrarily small, although it cannot be made zero without causing the coefficient of the $\|\nabla w\|_{L^2}$ term to diverge. 
On the other hand, since $\lv\nabla w(t)\rv_{L^2}\to0$ as $t\to\infty$ for each fixed $R$, the gradient term becomes arbitrarily small as $t$ gets large. Combining these estimates carefully yields $\lv w(t)\rv_{L^p}\to0$ as $t\to\infty$. 

The Gagliardo--Nirenberg inequality adapted to a strip is also required in Section \ref{s:decay}. We therefore formulate it in the slightly more general form needed there, involving $\Vert w \Vert_{L^p}^k $. 

\begin{proposition}[The Gagliardo--Nirenberg inequality for strips]\label{prop:G-N_strip} 

Suppose that $ w$ is a sufficiently smooth function defined in $\Omega := \OM$, subject to homogeneous Neumann boundary conditions on $y=\pm L$. Then, for any $R>0$, 
\[
\Vert w \Vert_{L^p(\Omega)}^k \leq  C\Big((2R+1)\Vert \nabla  w \Vert_{\LLO}^{k\theta}\lv  w\rv_{\LLO}^{k(1-\theta)} + \frac{(2R+1)^{k(1-\theta)/2}}{R^{k\theta/2}} \Vert  w \Vert_{\LLO}^k\Big),
\]
where $k >1$,  $0 < \theta < \min(2/k,1)$, and $p = 2/(1-\theta)$, and $C>0$ is some constant. 
\end{proposition}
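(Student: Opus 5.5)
The plan is to extend $w$ to a function $\tilde w$ on all of $\mathbb{R}^2$ and apply the standard Gagliardo--Nirenberg inequality there:
\[
\|f\|_{L^p(\mathbb{R}^2)}\leq C\|\nabla f\|_{L^2(\mathbb{R}^2)}^{\theta}\|f\|_{L^2(\mathbb{R}^2)}^{1-\theta},\qquad p=2/(1-\theta).
\]
The extension is built in two stages. First, use the Neumann boundary conditions to reflect $w$ evenly across $y=\pm L$. Repeating the reflection gives a $4L$-periodic even extension $\bar w$ defined on the whole plane. Because of the Neumann conditions, $\bar w$ lies in $H^1_{\mathrm{loc}}$ with no jump in value across the reflection lines. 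Its $L^2$ and $L^p$ norms over any horizontal band made of $N$ copies of the strip are $N$ times (respectively $N^{1/p}$ times) those of $w$.

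Second, taper $\bar w$ to zero with a cutoff. Take $\chi_R(y)$ equal to $1$ on $|y|\leq L$ and supported in $|y|\leq L+R L'$, for a fixed scale; it is cleanest to measure $R$ in units of strip widths, so the support has width comparable to $(2R+1)$ copies. Require $|\chi_R'|\lesssim 1/R$, for instance a linear ramp over a region of width $\sim R$. Set $\tilde w=\chi_R\bar w$. Then
\[
\|w\|_{L^p(\Omega)}\leq\|\tilde w\|_{L^p(\mathbb{R}^2)},\qquad \|\tilde w\|_{L^2}^2\lesssim (2R+1)\|w\|_{L^2(\Omega)}^2 .
\]
For the gradient,
\[
\|\nabla\tilde w\|_{L^2}\leq\|\chi_R\nabla\bar w\|_{L^2}+\|\chi_R'\bar w\|_{L^2}\lesssim(2R+1)^{1/2}\|\nabla w\|_{L^2(\Omega)}+R^{-1}(2R+1)^{1/2}\|w\|_{L^2(\Omega)}.
\]
If the ramp length is instead tied to the number of copies, the second coefficient is $R^{-1/2}$ rather than $R^{-1}(2R+1)^{1/2}$. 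Either way it is a negative power of $R$ times a factor polynomial in $(2R+1)$, and I would tune the cutoff so that the exponents land exactly as stated, namely $R^{-k\theta/2}$ together with $(2R+1)^{k(1-\theta)/2}$.

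Now combine. Insert these bounds into Gagliardo--Nirenberg, raise to the power $k$, and use $(a+b)^{k\theta}\leq C(a^{k\theta}+b^{k\theta})$. This gives
\[
\|w\|_{L^p}^k\leq C\Big((2R+1)^{k/2}\|\nabla w\|^{k\theta}\|w\|^{k(1-\theta)}+(2R+1)^{k(1-\theta)/2}\,\|\chi_R'\bar w\|^{k\theta}\,\|w\|^{k(1-\theta)}\Big).
\]
The second term is $\frac{(2R+1)^{k(1-\theta)/2}}{R^{k\theta/2}}\|w\|_{L^2}^k$ once the cutoff satisfies $\|\chi_R'\bar w\|_{L^2}\lesssim R^{-1/2}\|w\|_{L^2}$. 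That holds for a ramp of length $\sim R$ covering $\sim R$ copies: the derivative squared is $R^{-2}$ over $R$ copies, giving $R^{-1}\|w\|^2$.

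The first prefactor is $(2R+1)^{k/2}$ rather than the stated $(2R+1)$. Here I would reconcile this by using $k\theta<2$ to rebalance: either exploit the fact that the constant only needs to be some function of $R$ for the later application, or apply Gagliardo--Nirenberg to $\tilde w$ with the $L^p$ norm on the left normalized by the number of copies, since $\|\tilde w\|_{L^p}^p\ge\|w\|^p_{L^p(\Omega)}$ is wasteful. I expect this bookkeeping of the powers of $(2R+1)$, so that they match the stated form, to be the main obstacle. The analytic content is routine: the reflection is $H^1$ thanks to Neumann, and the cutoff error is controlled by $\|w\|_{L^2}$ with a small coefficient as $R\to\infty$, because $k\theta/2>k(1-\theta)/2\cdot 0$ effectively. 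Concretely, it requires $R^{-k\theta/2}(2R+1)^{k(1-\theta)/2}\to0$, i.e.\ $\theta>1/2$ in the regime used. I would check that claim directly and, if needed, redistribute the $L^2$ factors so that the exponent of $R$ in the error term is negative.
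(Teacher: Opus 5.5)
Your route is essentially the paper's: extend $w$ across $y=\pm L$ by reflection, taper it to zero over a transverse layer of width $\sim RL$, apply the whole-plane Gagliardo--Nirenberg inequality, and split with $(a+b)^{k\theta/2}\le a^{k\theta/2}+b^{k\theta/2}$. The only difference is in the extension. The paper places one reflected copy of $w$ on each side, stretched by the factor $R$ and multiplied by the taper $g$. You instead use a periodic reflection under a long linear ramp. Both constructions give the two bounds in \eqref{eq:w_R}. (An even reflection is continuous whatever the boundary condition, so $H^1$ does not need Neumann; Neumann only makes the extension $C^1$.)

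The mismatch you flag is a real gap, and the paper's proof has it too. Its final chain gives $(2R+1)^{k\theta/2}(2R+1)^{k(1-\theta)/2}=(2R+1)^{k/2}$ in front of $\|\nabla w\|^{k\theta}\|w\|^{k(1-\theta)}$, and then writes $(2R+1)$, which is justified only for $k\le 2$. This is harmless in the paper's applications: they only use that the second coefficient is small for large $R$ and then fix $R$, so the first coefficient is just a constant. Still, as written, neither argument proves the statement for $k>2$.

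Your first remedy, settling for another function of $R$, changes the statement rather than proving it. Your second remedy works, and you should carry it out. For $R\ge1$, $\chi_R\ge\frac12$ on $\gtrsim R$ full reflected copies of $\Omega$, and each copy carries $\|w\|_{L^p(\Omega)}^p$. Hence $\|w\|_{L^p(\Omega)}^k\lesssim R^{-k(1-\theta)/2}\|\tilde w\|_{L^p(\R^2)}^k$. Combined with your $L^2$ bounds, this gives
\[
\|w\|_{L^p(\Omega)}^k\le C\Big(R^{k\theta/2}\|\nabla w\|_{\LLO}^{k\theta}\|w\|_{\LLO}^{k(1-\theta)}+R^{-k\theta/2}\|w\|_{\LLO}^k\Big),\qquad R\ge1.
\]
Since $k\theta<2$, we have $R^{k\theta/2}\le 2R+1$. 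Also $R^{-k\theta/2}\le(2R+1)^{k(1-\theta)/2}R^{-k\theta/2}$. So the stated bound follows, and this is where $\theta<2/k$ does real work.

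For $0<R<1$, your cutoff term is only $O(R^{-1}\|w\|)$, not $O(R^{-1/2}\|w\|)$, because the ramp then lies inside a single copy. (The paper's compressed copy similarly costs $R^{-1}\|w_y\|^2$, so its \eqref{eq:w_R} is also off there.) But for such $R$ the right-hand side of the statement is at least $\|\nabla w\|^{k\theta}\|w\|^{k(1-\theta)}+\|w\|^k$, so the case $R=1$ suffices.

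Finally, the condition $\theta>1/2$ in your last sentences belongs to the later application in Step 3, not to this proposition.
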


\begin{proof}
We begin by extending $w$ to $\mathbb{R}^2$ as
\[
w_R(x,y) = \begin{cases}w(x,y),  &y \in (-L,L), \\
w\big(x,\frac{2L-y}{R}\big) g\big(\frac{y-L}{R}\big), &y \in (L,(2R+1)L), \\
w\big(x,-\frac{2L+y}{R}\big) g\big(-\frac{y+L}{R}\big), \quad &y \in (-L,-(2R+1)L), \\
0, &|y|\in((2R+1)L,\infty), \end{cases}
\]
where $g\in C^1(0,2L)$ is chosen so that $g(0)=1$, $g'(0)=0$ while $g(2L)=0$, $g'(2L)=0$, and $g$ is monotonically decreasing. For concreteness, we may choose $g(y) = \cos^2(\frac{\pi y}{4L})$. It is then straightforward to see that
\begin{equation}\label{eq:w_R}
\begin{aligned}
 &\Vert  w \Vert_{\LLO}^2 \leq \Vert w_R \Vert_{\LL}^2  \leq (2R+1) \Vert  w \Vert_{\LLO}^2,\\
 &\Vert \nabla  w \Vert_{\LLO}^2 \leq \Vert \nabla w_R \Vert_{\LL}^2 \leq (2R+1) \Vert \nabla  w\Vert^2_{\LLO} + \frac{2}{R} \max_{y\in(0,2L)}|g'(y)|^2 \Vert w \Vert_{\LLO}^2.
\end{aligned}
\end{equation}
When $g(y) = \cos^2(\frac{\pi y}{4L})$, $\max_{y\in(0,L)}|g'(y)|=\frac{\pi}{4L}$.

We now apply the Gagliardo--Nirenberg inequality on $\R^2$ to $w_R$, choosing $j = 0$, $m = 1$, $r = 2$, $q = 2$, and for $\theta$ and $p$ as given, and we utilize \eqref{eq:w_R} to obtain
\begin{align*}
\Vert & w_R \Vert_{L^p(\R^2)}^k \\
& \leq C \Vert \nabla w_R \Vert_{\LL}^{k\theta} \Vert w_R\Vert_{\LL}^{k(1-\theta)} \\
& \leq  C \Big((2R+1) \Vert \nabla  w\Vert^2_{\LLO} + \frac{2}{R} \Big(\frac{4}{\pi L}\Big)^2 \Vert w \Vert_{\LLO}^2 \Big)^{k\theta/2} (2R+1)^{k(1-\theta)/2} \Vert  w \Vert_{\LLO}^{k(1-\theta)}\\
&\leq C\Big((2R+1)\Vert \nabla  w \Vert_{\LLO}^{k\theta}\lv  w\rv_{\LLO}^{k(1-\theta)} + \frac{32^{k\theta/2}(2R+1)^{k(1-\theta)/2}}{(\pi L)^{k\theta}R^{k\theta/2}} \Vert  w \Vert_{\LLO}^k\Big)
\end{align*}
for some constant $C>0$. Since $\Vert w \Vert_{L^p(\Omega)}^k \leq \Vert w_R \Vert_{L^p(\R^2)}^k $, the proof is complete. 
\end{proof}

Therefore, although the standard Gagliardo--Nirenberg inequality on the whole plane is not directly applicable to a strip with homogeneous Neumann boundary conditions, it admits a suitable adaptation. Extending a function from the strip to $\mathbb{R}^2$ incurs an additional term proportional to $\lv w\rv_{L^2}$. For a sufficiently large $R$ and  an appropriate choice of $\theta$, the coefficient of this lower-order term can be made arbitrarily small, although not zero without making the coefficient of the $\|\nabla w\|_{L^2}$ term unbounded. 

Modified Gagliardo--Nirenberg inequalities are available on bounded domains with a lower-order penalty term $C\lv w\rv_{L^k}$ for arbitrary $k>0$. This arbitrariness relies on properties of $L^p$ spaces on bounded domains. The domains considered in Proposition \ref{prop:G-N_strip}, however, are unbounded and we do not claim that the exponent of the lower-order term is arbitrary here. 

We now turn to the proof of Theorem \ref{t:p>2}.

\begin{proof}[Proof of Theorem \ref{t:p>2}]
To simplify notation, throughout the proof, we write $\iint$ instead of $\int_{-\infty}^\infty\int_{-L}^L(\cdots)~dydx$.

\paragraph{Step 1:} We begin by calculating
\begin{align}
\half&\frac{d}{dt}(\lv w (t)\rv_{L^2} ^ {2} + \lv v(t) \rv_{L^2}^{2}) \notag\\
&= \iint ww_{t} + vv_{t} \notag\\ 
&\,\,\begin{aligned}= &\iint   ( \Delta w ) w + \nu w _ {xxx} w -( \phi w w_{x} +  \phi_{x}w ^ { 2 } + w^{2}w_{x} - \bm{\dot}{x_0}\phi_{x}w + v_{y}w) \\
& +\iint ( \Delta v ) v -  w_y v  \end{aligned} \notag\\ 
&=- \iint  (|\nabla w| ^ { 2 } +|\nabla v| ^ { 2 }) - \frac { 1 } { 2 } \iint \phi_{x} | w |^{2} -\gamma\Big(\iint\phi_{x}w\Big)^{2} \notag \\
&\,\,\begin{aligned}=& -\iint  \varepsilon(|\nabla w| ^ { 2 }+ |\nabla v| ^ { 2 })\\ 
&- \iint (1-\varepsilon) (|\nabla w|^2+ |\nabla v| ^ { 2 })- \frac { 1 } { 2 } \iint \phi_{x} | w |^{2} -\gamma\Big(\iint\phi_{x}w\Big)^{2} \end{aligned}\notag \\
& =: -\iint  \varepsilon(|\nabla v| ^ { 2 } +|\nabla w| ^ { 2 }) - \langle (w,v) , \mathcal{H}(w,v)\rangle,\label{e:step1}
\end{align}
where $\epsilon>0$ is sufficiently small. Throughout we adopt the shorthand notation $\bm{\dot}{x_0}=\frac{dx_0}{dt}$ where convenient. Here $\mathcal{H}:L^2(\mathbb{R}\times[-L,L]) \to L^2(\mathbb{R}\times[-L,L])$ is defined as 
\[ 
\mathcal{H}(w,v) = \Big(-(1-\varepsilon)\Delta w + \frac{1}{2}\phi_x(x)w + \gamma \mathcal{P}_{\phi_x}w, -(1-\varepsilon)\Delta v\Big), 
\] 
where $\mathcal{P}_{\phi_x}: L^2(\mathbb{R}\times[-L,L]) \to L^2(\mathbb{R}\times[-L,L])$ is the rank-one projection operator onto the span of $\phi_x$, defined as 
\[
\mathcal{P}_{\phi_x} f = \Big(\iint \phi_x(x) f(x,y)~dy dx\Big) \phi_x.
\]

We proceed to the associated eigenvalue problem
\begin{equation}\label{e:2Dop} 
\Big(- ( 1 - \varepsilon) \Delta + \frac { 1 } { 2 } \phi _ { x }(x) + \gamma \mathcal{P}_{\phi_x}\Big)w=\lambda w. 
\end{equation}
We write $w = \sum_{j = 0}^\infty W(x)a_j\cos(\frac{\pi y j}{L})$, so that \eqref{e:2Dop} becomes
\begin{equation}\label{e:eig1D}
\left\{\begin{aligned}
&- ( 1 - \varepsilon) \frac{d^2W}{dx^2} + \frac { 1 } { 2 } \phi _ { x }(x)W +2L\gamma \Big(\int\phi_{x}(x)W~dx\Big)^{2} = \lambda W, && j=0,\\
&- ( 1 - \varepsilon)\frac{d^2W}{dx^2} + \frac { 1 } { 2 } \phi _ { x }(x)W + (1-\varepsilon)\frac{\pi^2j^2}{4L^2}W = \lambda W, && j \geq 1.
\end{aligned}\right.
\end{equation}
Recall from \cite{BARKER2025} that 
\[-(1-\varepsilon)\frac{d^2}{dx^2}+ \frac { 1 } { 2 } \phi _ { x } (x)+ \gamma \mathcal{P}^{(1)}_{\phi_x},
\]
where $\mathcal{P}^{(1)}_{\phi_x}$ is the $L^2(\R)$ projection operator onto $\phi_x$, has no negative eigenvalues if $\gamma>0$ is sufficiently large, while $-(1-\varepsilon)\frac{d^2}{dx^2}+\frac12\phi_x(x)$ has exactly one negative eigenvalue by \eqref{e:stability}, denoted by $\lambda_{\min}$. 
Consequently, the first equation in \eqref{e:eig1D} has no negative eigenvalues for the same sufficiently large $\Gamma := 2L\gamma$. Fix such a $\gamma_*$ and set $2L\gamma=\gamma_*+\eps$.
The second equation in \eqref{e:eig1D} for $j=1$ then has no negative eigenvalues as long as $\frac{\pi^2}{4L^2}>-\lambda_{\min}$. Let $0 < L < L_{\max}$, where $L_{\max} = \frac{\pi}{2\sqrt{-\lambda_\textrm{min}}}$.
Furthermore, the second equation in \eqref{e:eig1D} for $j\geq2$ has no negative eigenvalues whenever the cases $j=0$ and $j=1$ have none. This holds as long as $L<L_{\max}$. To recapitulate, \eqref{e:2Dop} has no negative eigenvalues and, hence, $\langle (w,v) , \mathcal{H}(w,v)\rangle\geq0$, for sufficiently large $\gamma$ for $0<L<L_{\max}$.

Returning to the energy estimate, \eqref{e:step1} becomes
\begin{equation}\label{e:L2bounded} 
\frac{d}{dt}(\lv w \rv ^ { 2 }_{L^2} + \lv v \rv ^ { 2 }_{L^2}) \leq -\varepsilon \iint (|\nabla w| ^ { 2 } + |\nabla v| ^ { 2 }).
\end{equation}
Additionally, recalling $\gamma = \gamma_*+ \eps$, \eqref{e:step1} implies 
\begin{equation}\label{e:dxi0bounded} 
\frac{d}{dt}(\lv w \rv ^ { 2 }_{L^2} + \lv v \rv ^ { 2 }_{L^2}) \leq \varepsilon \Big(\iint\phi_{x}(x)w\Big)^{2} = \varepsilon \big|\bm{\dot}{x_0}\big|^2.\end{equation}
It follows from \eqref{e:L2bounded} that $\lv w(t)\rv_{L^2}^2 + \lv v(t)\rv_{L^2}^2$ is non-increasing and, hence, $\lv w(t) \rv_{L^2}$ and $\lv v(t) \rv_{L^2}$ remain bounded for $t\in[0,\infty)$. Furthermore, 
\begin{equation}\label{e:gradinL2}
\begin{aligned}
&\int_0^t ( \lv \nabla w(s) \rv_{L^2}^2 + \lv \nabla v(s) \rv_{L^2}^2)~ds \leq \frac{1}{2\eps}(\lv w_0\rv_{L^2}^2 + \lv v_0\rv_{L^2}^2-\lv w(t)\rv_{L^2}^2 -\lv v(t)\rv_{L^2}^2), \\
&\int_0^t \left| \frac{dx_0}{dt}(s)\right|^2~ds \leq \frac{1}{2\eps}(\lv w_0\rv_{L^2}^2 + \lv v_0\rv_{L^2}^2-\lv w( t)\rv_{L^2}^2 -\lv v(t)\rv_{L^2}^2).
\end{aligned}
\end{equation}
The right hand side of the first inequality is bounded independently of $t$, and therefore 
\[
\lv \nabla w \rv_{L^2}^2 + \lv \nabla v \rv_{L^2}^2 \in L^1([0,\infty);L^2(\OM)).
\]
Similarly, 
\[
\left| \frac{dx_0}{dt}\right| \in L^2([0,\infty);L^2(\OM)).
\]
The latter is not needed at this point but will be used in Section \ref{s:decay}. 

The maximal stability width $2L_{\max}$ depends implicitly on $\nu$ through the unique negative eigenvalue of $-(1-\varepsilon)\frac{d^2}{dx^2}+\frac12\phi_x(x)$. To indicate its magnitude, we numerically compute $\phi_x$ using a shooting method with $1.6\times10^6$ grid points and then numerically compute the eigenvalue $\lambda_{\min}$ of $-\frac{d^2}{dx^2}+\frac12\phi_x(x)$. Figure \ref{fig:Lmax_num} displays the resulting values of $L_{\max}$ for different values of $\nu$. 

\begin{figure}
    \centering
    \includegraphics[width=0.6\linewidth]{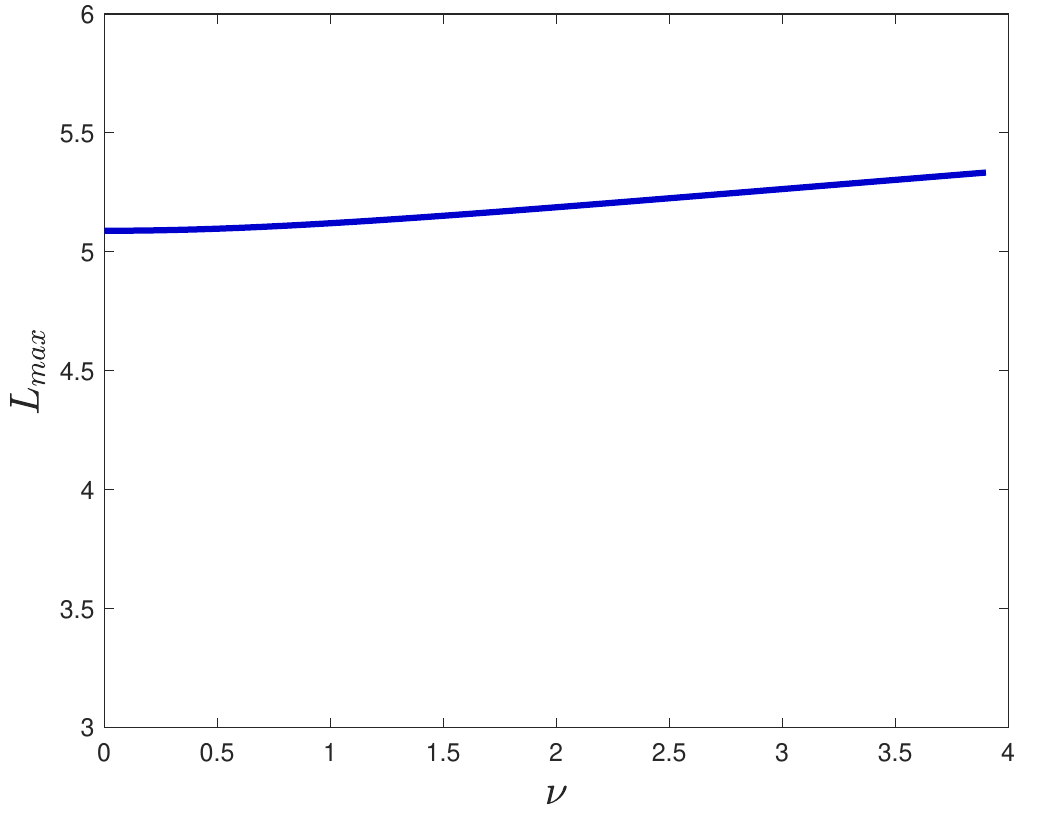}
    \caption{Numerically computed $L_{\textrm{max}}$ versus $\nu$.} 
    \label{fig:Lmax_num}
\end{figure}

\paragraph{Step 2:} We calculate 
\begin{align*} 
\half \frac{d}{dt}&(\|\nabla w\|_{L^2}^2+\|\nabla v\|_{L^2}^2) \\ 
= &\iint (w_{xx}w_t + w_{yy}w_t + v_{xx}v_t + v_{yy}v_t)~dxdy\\
= &-\iint w_{xx}(\phi w_x + w\phi_x + ww_x -\bm{\dot}{x_0} \phi_x + v_y - w_{xx} - w_{yy} {-\nu w_{xxx}})~dxdy \\
& - \iint w_{yy}(\phi w_x + \phi_xw + ww_x -\bm{\dot}{x_0} \phi_x + v_y - w_{xx} - w_{yy} {-\nu w_{xxx}})~dxdy \\
& + \iint (v_{xx} + v_{yy})^2-(v_{yy} w_y + v_{xx}w_y)~dxdy \\
   =& \gamma \iint \phi w_x  \iint \phi_{xx} w_x  -\iint \phi_{xx}ww_x - \frac{3}{2}\iint \phi_x w_x^2  - \frac12 \iint \phi_x w_y^2 \\
&+ \iint ww_xw_{xx} + \iint ww_xw_{yy}- \iint (w_{xx} + w_{yy})^2- \iint (v_{xx} + v_{yy})^2. 
\end{align*}
A straightforward calculation then reveals:
\begin{align*}
&\,\begin{aligned}\left|\gamma \iint \phi w_x  \iint \phi_{xx} w_x \right| &= \gamma \left| \iint \phi_x w  \iint \phi_{xx} w_x \right| \\ 
&\leq \gamma \lv \phi_x \rv_{L^\infty} \lv w \rv_{L^2} \lv \phi_{xx} \rv_{L^\infty} \lv w_x \rv_{L^2} \leq C_1\lv \nabla w \rv_{L^2}, \end{aligned}\\
&\left|\iint \phi_{xx}ww_x\right| \leq   \lv \phi_{xx} \rv_{L^\infty} \lv w \rv_{L^2}\lv w_x \rv_{L^2} \leq C_1\lv \nabla w \rv_{L^2}, \\
&\left|\frac{3}{2}\iint \phi_x w_x^2\right| + \left|\frac12 \iint \phi_x w_y^2\right| \leq \frac{3}{2}\lv \phi_x \rv_{L^\infty} \lv w_x \rv_{L^2}^2 + \frac12 \lv \phi_x \rv_{L^\infty} \lv w_y \rv_{L^2}^2\leq C_2 \lv \nabla w \rv_{L^2}^2, \\
&\,\begin{aligned}\left|\iint ww_xw_{xx}\right| +  \left|\iint ww_xw_{yy}\right|  &= \frac12  \left|\iint w_x^3\right| + \frac12  \left| \iint w_xw_y^2\right| \\
&\leq \lv \nabla w \rv_{L^3}^3 \leq C_4\lv \Delta w\rv_{L^2} \lv \nabla w \rv_{L^2}^2 + C_3\lv \nabla w \rv_{L^2}^3,\end{aligned}
\end{align*}
where $C_j$, $j=1,2,3,4$, are positive constants. For the last inequality,
we apply Proposition \ref{prop:G-N_strip} to $\nabla w $ with $p = 3$,  $\theta =2/3$, and $R=1$. Putting these together, 
\begin{align*}
\frac12\frac{d}{dt}&(\lv \nabla w \rv_{L^2}^2 + \lv \nabla v \rv_{L^2}^2) \\
&\leq C_1 \lv \nabla w \rv_{L^2} + C_2 \lv \nabla w \rv_{L^2}^2 + C_3\lv \nabla w \rv_{L^2}^3 +  C_4\lv \Delta w\rv_{L^2} \lv \nabla w \rv_{L^2}^2\\ 
& \quad -\lv \Delta w \rv_{L^2}^2  -\lv \Delta v \rv_{L^2}^2 \\
&\leq  C_1 \lv \nabla w \rv_{L^2} + C_2 \lv \nabla w \rv_{L^2}^2 + C_3\lv \nabla w \rv_{L^2}^3  + \frac{C_4}{4}\lv \nabla w \rv_{L^2}^4\\ 
&\quad -\Big(\lv \Delta w \rv_{L^2} -  \frac{C_4}{2} \lv \nabla w \rv_{L^2}^2\Big)^2\\
 &\leq  C_1 \lv \nabla w \rv_{L^2} + C_2 \lv \nabla w \rv_{L^2}^2 + C_3\lv \nabla w \rv_{L^2}^3 +C_4\lv \nabla w \rv_{L^2}^4. 
\end{align*}

Let 
\[
\zeta(t) = \lv \nabla w(t) \rv_{L^2}^2 + \lv \nabla v(t) \rv_{L^2}^2,
\]
so that the above becomes
\begin{equation}\label{e:zeta}
\frac12 \frac{d\zeta}{dt} \leq C_1 \zeta^\frac12 + C_2 \zeta + C_3\zeta^{\frac{3}{2}} + {C_4}\zeta^2.
\end{equation}

Since $\zeta \in L^1([0,\infty))$, for every $\delta > 0$ there exists some $T > 0$ sufficiently large such that 
\[
\zeta(T) < \delta \quad\text{and}\quad \int_T^\infty \zeta(t)dt < \delta.
\]
Let $T' > T$, and 
\[
M:= \max_{t \in (T,T')}\zeta(t).
\]
Multiplying \eqref{e:zeta} by $\zeta^{1/2}$ and integrating over $(T,T')$, we obtain
\begin{align*}
\frac{1}{3}(M^{3/2}-\delta^{3/2}) &\leq \frac12 \int_T^{T'}\zeta^{1/2} \left| \frac{d\zeta}{dt}\right|~dt \\
&\leq \int_T^{T'} (C_1 \zeta + C_2 \zeta^{3/2} + C_3\zeta^2 + {C_4}\zeta^{5/2})~dt\\
& \leq (C_1 + C_2M^{1/2} + C_3 M + {C_4} M^{3/2})\int_T^{T'}\zeta~dt\\
& \leq (C_1 + C_2M^{1/2} + C_3 M + {C_4} M^{3/2})\int_T^{\infty}\zeta~dt\\
&\leq C_1\delta + C_2\delta M^{1/2} + C_3\delta M + {C_4}\delta M^{3/2}.
\end{align*}
Consequently, 

\[
M^{3/2} \leq \frac{\delta^{3/2} + 3 C_1\delta + C_2\delta M^{1/2} + C_3\delta M}{1 -  {C_4}\delta} \leq C\delta^{3/2}
\]
for $\delta>0$ sufficiently small, independent of $T'$, implying 
\[
\zeta(t)= \lv \nabla w(t)\rv_{L^2}^2 + \lv \nabla v(t) \rv_{L^2}^2 \to 0 \quad\text{as $ t \to \infty$}.
\]

\paragraph{Step 3:}It remains to show that $w,v\to 0$ as $t\to\infty$ in appropriate $L^p$ norms.

An application of Proposition \ref{prop:G-N_strip}, with $\theta > 1/2$ and $p = 2/(1-\theta)( > 4)$, yields
\[
\Vert w \Vert_{L^p}^2 \leq  C\Big((2R+1)\Vert \nabla  w \Vert_{L^2}^{2\theta}\lv  w\rv_{L^2}^{2-2\theta} + \frac{(2R+1)^{1-\theta}}{R^{\theta}} \Vert  w \Vert_{L^2}^2\Big).
\]
Since $\Vert  w(t) \Vert_{L^2}^2$ is bounded for $t\in[0,\infty)$ and since $\theta > 1/2$, for every $\eps > 0$ there exists $R_0 > 0$ sufficiently large such that 
\[
C\frac{(2R_0+1)^{1-\theta}}{R_0^\theta} \Vert  w \Vert_{L^2}^2 < \frac{\eps}{2} .
\]

We fix such $R_0$. Since $\Vert \nabla  w(t) \Vert_{L^2} \to 0$ as $t\to \infty$, there exists $T>0$ sufficiently large such that 
\[C(2R_0+1)\Vert \nabla  w \Vert_{L^2}^{2\theta}\lv  w\rv_{L^2}^{2-2\theta} < \frac{\eps}{2} \quad \text{for $t > T$} .
\] 
Putting these together, for every $\eps > 0$ there exists $T > 0$ sufficiently large such that $\Vert w(t) \Vert_{L^p}^2 < \eps$ for $t > T$. Therefore,
\[
\Vert w(t) \Vert_{L^p} \to 0\quad \text{as $t \to \infty$}.
\]
This holds for all $\theta \in(1/2, 1)$ or for all $p \in (4, \infty)$. 
We then interpolate between $L^2$ and $L^p$ using the boundedness of $\Vert  w \Vert_{L^2}$, to obtain 
\[
\Vert w(t) \Vert_{L^p} \to 0\quad \text{as $t \to \infty$} \quad\text{for $ p \in(2, \infty)$}. 
\]

On the other hand, since $v$ satisfies homogeneous Dirichlet boundary conditions in $y$, it follows from the Poincar{\'e} inequality that
\[
\lv v \rv_{L^2} \leq C \lv \nabla v \rv_{L^2}.
\]
Moreover, the standard Gagliardo--Nirenberg inequality yields 
\[
\lv v \rv_{L^p} \leq C\lv \nabla v\rv_{L^2}^{1-2/p} \lv v \rv_{L^2}^{2/p} \quad \text{for $p \in (2, \infty)$}.
\]
Since $\lv v(t) \rv_{L^2}$ is bounded for $t\in[0,\infty)$ and since $\lv \nabla v(t) \rv_{L^2} \to 0$ as $t \to \infty$, it follows that $\lv v(t) \rv_{L^p} \to 0$ as $t\to\infty$ for $p \in (2, \infty)$.

This completes the proof.
\end{proof}

\section{Stability and algebraic decay in \texorpdfstring{$L^p$}{} for \texorpdfstring{$p \in (1,2]$}{} }\label{s:decay}

We build on Section \ref{s:stab} to establish stability under the $L^p$ norm for $p \in (1,2]$, along with algebraic decay rates. The argument follows the framework of \cite{stanislavova2025asymptotic}, developed for \eqref{e:KdVB}, with necessary modifications for a two-dimensional strip.

\begin{hyp}[Additional localization of initial data]\label{hyp:2}
Let $(u_0,v_0)$ denote the initial condition for \eqref{e:Rajopadhye} and \eqref{e:BC} (see \eqref{e:IC}), and $\phi$ satisfy \eqref{e:KdVB0}. We assume that 
\begin{align*}
    &\iint_{\R \times [-L,L]} |u_0(x,y) - \phi(x)|^2(|x|+1)~dxdy < \infty,\\
    &\iint_{\R \times [-L,L]} |v_0(x,y)|^2(|x|+1)~dxdy < \infty. \\
\end{align*}
\end{hyp}

\begin{theorem}[Stability and algebraic decay]\label{t:stab_Lp_1<p<2}
Let $( u, v)$ denote a solution of \eqref{e:Rajopadhye}, \eqref{e:BC}, and \eqref{e:IC}, satisfying Hypotheses \ref{hyp:1} and \ref{hyp:2}, and $\phi$ a solution of \eqref{e:KdVB0}. Suppose that \eqref{e:xi_0} and \eqref{e:stability} hold, where $\gamma>0$ is chosen in the proof of Theorem \ref{t:p>2}, and $\lambda_{\min}$ is the unique negative eigenvalue of $-\frac{d^2}{dx^2} + \frac{1}{2}\phi_x(x)$. 

Let $L < L_{\max} := \frac{\pi}{2\sqrt{\lambda_{\min}}}$. Then 
\[
\lv w(t)\rv_{L^2}+\lv v(t)\rv_{L^2} \leq Ct^{-1/2} \quad \text{as $t\to\infty$}
\]
for some constant $C>0$. Moreover, for $\delta > 0$ sufficiently small,
\[
\lv w(t) \rv_{L^p}+\lv v(t) \rv_{L^p} \leq Ct^{-(1-1/p)+\delta} \quad \text{as $t\to\infty$}\quad \text{for $p \in (1, 2)$}
\]
for some constant $C>0$, depending on $\delta$.
\end{theorem}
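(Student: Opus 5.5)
We will follow the weighted-energy strategy of \cite{stanislavova2025asymptotic}, adapted to the strip. The starting point is the energy identity \eqref{e:step1} together with \eqref{e:L2bounded}. We will combine it with a second, spatially weighted energy
\[
E_1(t):=\iint (|x|+1)\big(w^2+v^2\big)~dxdy,
\]
which is finite at $t=0$ by Hypothesis \ref{hyp:2}. The translation $\phi(x)-\phi(x-x_0(t))$ is localized and $x_0$ is bounded, so $E_1(0)$ is finite in the modulated variables as well.

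\textbf{Step A (growth of $E_1$).} Differentiate $E_1$ using \eqref{e:Rajopadhye}. Every term without weight derivatives reproduces the structure of \eqref{e:step1}. The weight derivative $\sgn(x)$ produces only lower-order contributions:
\begin{itemize}
\item $\iint \sgn(x)(w w_x + v v_x)$ and the dispersive term $\nu\iint \sgn(x) w_x^2$;
\item the flux terms $\iint \sgn(x)(\tfrac13 w^3+\tfrac12\phi w^2)$;
\item the coupling terms $\iint \sgn(x)\, w v_y$;
\item the modulation term $\dot x_0\iint(|x|+1)\phi_x w$.
\end{itemize}
The sign of $\phi_x w^2$ is not controlled pointwise, but the weighted quadratic form is positive up to a compact error. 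The reason is that $\phi_x$ decays exponentially, so $|x|\phi_x$ is bounded, and the full quadratic form was shown to be nonnegative in the proof of Theorem \ref{t:p>2}. We will bound the contributions as follows:
\begin{itemize}
\item the quadratic terms by $C(\|w\|_{L^2}^2+\|v\|_{L^2}^2+\|\nabla w\|_{L^2}^2+\|\nabla v\|_{L^2}^2)$;
\item the modulation term by $C|\dot x_0|(\|w\|_{L^2}+E_1^{1/2})$;
\item the cubic term by $\|w\|_{L^3}^3$, which Proposition \ref{prop:G-N_strip} (with $k=3$, $\theta=1/3$, $R=1$) controls by $C(\|\nabla w\|_{L^2}\|w\|_{L^2}^2+\|w\|_{L^2}^3)$.
\end{itemize}
Integrating in time and using \eqref{e:gradinL2}, the $L^2$ bound on $\dot x_0$ and Gronwall's inequality, we aim for $E_1(t)\le C(1+t)$. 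At worst the bound is $C(1+t)^{1+\delta}$. This step is the main obstacle. The undifferentiated terms $\iint \sgn(x)\phi w^2$ and $\|w\|_{L^2}^3$ are only bounded rather than integrable, so they give linear growth at best. The coupling between $v_y$ and the weight also has to be organized so that the $w$ and $v$ contributions cancel, as they do in the unweighted identity. If exact linear growth fails, we will accept a $t^{\delta}$ loss, which is consistent with the $\delta$ in the statement.

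\textbf{Step B ($L^2$ decay).} We will use a Nash-type inequality on the strip, with the Neumann difficulty handled as in Proposition \ref{prop:G-N_strip}. Splitting $|x|<\rho$ and $|x|>\rho$ gives
\[
\|w\|_{L^2}^2\le \|w\|_{L^2(|x|<\rho)}^2+\rho^{-1}E_1 .
\]
On the strip the $x$-direction is effectively one-dimensional, so $\|w\|_{L^2(|x|<\rho)}^2\le C\rho\,\|w\|_{L^\infty_xL^2_y}^2\le C\rho\|w\|_{L^2}\|\nabla w\|_{L^2}$. The $y$-mean is treated by the one-dimensional Agmon inequality, and the oscillating part by Poincar\'e in $y$. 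Optimizing in $\rho$ gives
\[
\|w\|_{L^2}^4\lesssim \|\nabla w\|_{L^2}^2\,\|w\|_{L^2}^{2/3}E_1^{2/3}\cdots
\]
Rather than tracking exponents exactly, we use the scheme of \cite{stanislavova2025asymptotic}. Since $E(t):=\|w\|_{L^2}^2+\|v\|_{L^2}^2$ is nonincreasing and $\int_0^\infty\|\nabla w\|_{L^2}^2+\|\nabla v\|_{L^2}^2<\infty$, we have
\[
tE(t)\le \int_0^t E\le \rho\int_0^t\|\nabla w\|_{L^2}^2+\|\nabla v\|_{L^2}^2 + t\rho^{-1}\sup_{s\le t}E_1(s).
\]
Choosing $\rho$ comparable to a power of $t$ and inserting the growth bound from Step A will give $E(t)\le Ct^{-1}$, that is, the rate $t^{-1/2}$. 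For $v$ the Poincar\'e inequality makes this step easier.

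\textbf{Step C ($1<p<2$).} We will use H\"older with the weight $(|x|+1)^{1+\eta}$:
\[
\|w\|_{L^p}\le \|(|x|+1)^{\alpha}w\|_{L^2}^{a}\,\|w\|_{L^2}^{1-a},
\]
with $\alpha$ chosen so that $(|x|+1)^{-\alpha}\in L^{2p/(2-p)}$ on the strip. The weighted norm grows at most like $t^{1/2+\delta}$, by Step A and an interpolation in the weight, and $\|w\|_{L^2}\lesssim t^{-1/2}$ by Step B. Balancing the exponents yields $\|w\|_{L^p}\lesssim t^{-(1-1/p)+\delta}$, and the same argument applies to $v$.
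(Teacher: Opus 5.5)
Your proposal has a genuine gap, and it sits exactly at the point you flag as the main obstacle. Integrating the advection term against the weight gives $-\iint \phi\, w_x w\,|x| = \tfrac12\iint \phi_x w^2|x| + \tfrac12\iint \mathrm{sgn}(x)\,\phi\, w^2$. The last term is not a merely bounded error that you integrate in time. Since $-\mathrm{sgn}(x)\phi(x)\to 1$ as $|x|\to\infty$, moving it to the left-hand side produces a damping term $\tfrac14\|w\|_{L^2}^2$, up to a compactly supported error $C\iint_{|x|<A}w^2$. This is the compressivity of the front, and it is the key idea of the paper's proof.

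The compact error, the term $\tfrac12\iint\phi_x w^2|x|$, and the cubic term then cannot be estimated by an $O(1)$ multiple of $\|w\|_{L^2}^2$ (or of $\|w\|_{L^2}^3\le \|w_0\|_{L^2}\|w\|_{L^2}^2$), as your bullets do, since that would swamp the damping. The paper instead bounds them by $\|w\|_{L^5}^2$ and $\|w\|_{L^5}^{7/3}$. It then applies Proposition \ref{prop:G-N_strip} with $R$ large, so that the coefficient of $\|w\|_{L^2}^2$ drops below $1/8$ and the remainder is of the form $\|\nabla w\|_{L^2}^{6/5}\|w\|_{L^2}^{4/5}$. After H\"older and Young in time, everything is controlled by $\int_0^\infty(\|\nabla w\|_{L^2}^2+\|\nabla v\|_{L^2}^2+|\dot x_0|^2)\,dt$ plus a small multiple of $\int_0^t\|w\|_{L^2}^2$. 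This yields $\sup_t\iint(w^2+v^2)|x|+\int_0^\infty\|w\|_{L^2}^2\,dt<\infty$. The $t^{-1/2}$ rate then follows from monotonicity via $tE(t)\le\int_0^t E$. The uniform weighted bound gives uniform $L^q$ bounds for $q>1$, which interpolate with $L^2$ to give $t^{-(1-1/p)+\delta}$.

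Without this sign, your Steps B and C cannot reach the stated rates. Your splitting actually gives $\|w\|_{L^2}^2\le C\rho^2\|\nabla w\|_{L^2}^2+2\rho^{-1}E_1$ (note $\rho^2$, not $\rho$). So with $\sup_{s\le t}E_1\lesssim 1+t$ you get $tE(t)\lesssim \rho^2+t^2/\rho$, which is no decay at all. Even if $E_1$ stayed bounded, a Nash-type argument gives at best $E'\lesssim -E^3/E_1^2$, i.e.\ $\|w\|_{L^2}\lesssim t^{-1/4}$, the heat-equation rate. The rate $t^{-1/2}$ is not a diffusive effect: it reflects the front absorbing the perturbation, and that is exactly what the sign of $-\mathrm{sgn}(x)\phi$ encodes. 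Similarly, in Step C a weighted norm growing like $t^{1/2}$ combined with $\|w\|_{L^2}\lesssim t^{-1/2}$ gives an exponent of roughly $2/p-3/2$, which is not decay for $p$ near $1$.

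Two smaller points. First, with the kinked weight $|x|$, the dispersive term $\nu\iint w_{xxx}w|x|$ also produces a trace term $\int_{-L}^L w(0,y)w_x(0,y)\,dy$, which $\|w\|_{L^2}\|\nabla w\|_{L^2}$ does not control. The paper handles it by averaging over the weights $|x-a|$ for $a\in(-1,1)$; a smooth weight would also work. Second, the coupling terms cancel identically, because the weight is independent of $y$ and $v=0$ at $y=\pm L$. So no $\iint \mathrm{sgn}(x)\,w v_y$ term arises.
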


\begin{remark*}\rm
The algebraic decay rates in Theorem \ref{t:stab_Lp_1<p<2} do not contradict Pego's observation \cite{Pego} that no uniform decay estimate of the form 
\[
\lv w(t)\rv_1\leq C(t)\lv w(0)\rv_2, \qquad \text{$C(t)\to0$ as $t\to\infty$},
\]
can hold for \eqref{e:KdVB} if both $\lv \cdot \rv_1$ and $\lv \cdot \rv_2$ are invariant under translations. In our proof below, the additive constant in \eqref{e:bound_withV} contains $\iint(w_0^2 + v_0^2)(2|x| + 1)$, which is not translation invariant. Accordingly, $C$ in Theorem \ref{t:stab_Lp_1<p<2} depends on weighted norms of $u_0$ and $v_0$. 
\end{remark*}

We outline the proof of Theorem \ref{t:stab_Lp_1<p<2}. The main task is to show that $\lv w(t)\rv_{L^2}^2$ is integrable. Since $\lv w(t)\rv_{L^2}^2$ is decreasing by Section \ref{s:stab}, this would imply $\lv w(t)\rv_{L^2}\leq Ct^{-1/2}$. On the other hand, since $\int_0^\infty\lv \nabla v(t) \rv_{L^2}^2dt < \infty$ by Section \ref{s:stab}, it follows from the Poincar{\'e} inequality that $\lv v(t)\rv_{L^2}^2$ is integrable.

To prove the integrability of $\lv w(t)\rv_{L^2}^2$, we develop a weighted energy estimate for 
\[
\iint(w^2+v^2)|x|~dxdy.
\]
An important contribution comes from $\iint \phi w_xw|x|$, which becomes $\iint-\sgn(x)\phi(x)w^2$ after integration by parts. See \eqref{e:energy|x|}. Observing $-\sgn(x)\phi(x)\to1$ as $|x|\to\infty$, we bound this by $\iint w^2$ up to an error involving $\|w\|_{L^5}$. We then bound the remaining terms mainly by two time-integrable quantities established in Section \ref{s:stab}. First, $\lv\nabla w(t)\rv_{L^2}^2+\lv\nabla v(t)\rv_{L^2}^2$ are integrable.  Second, $\int_0^t|\bm{\dot}{x_0}(s)|^2~ds<\infty$ by \eqref{e:gradinL2}. The Gagliardo--Nirenberg inequality adapted to a strip controls the resulting $L^5$ terms.

Integration by parts also produces a boundary term at $x=0$, for which a pointwise $L^\infty$ control needed for a direct estimate is unavailable. Instead, we repeat the weighted energy estimate with $|x-a|$ and average the result over $a\in(-1,1)$, thereby converting pointwise norms into integral ones.

We then integrate $\|w(t)\|_{L^2}^2$ and $\frac{d}{dt}\iint (w^2 + v^2)|x| $, which are bounded by quantities that are integrable in $t$, to show that $\int_0^\infty\lv w(t)\rv_{L^2}^2dt$ and $\iint(w^2+v^2)|x|$ remain bounded. The former bound yields the decay rate in $L^2$, while the latter provides the spatial localization needed for interpolation and, hence, the decay rate in $L^p$ for $p\in(1,2)$. 

Importantly, this closes the range of exponents left open by Section \ref{s:stab}.

\newpage

\begin{proof}
To streamline notation, throughout the proof, we use $\iint$ for $\int_{-\infty}^\infty\int_{-L}^L(\cdots)~dydx$. 

We begin by calculating
\begin{align*}
\frac{1}{2}\frac{d}{dt}& \iint (w^2 + v^2)|x|  \\
=&  -\iint ( \phi w w_{x} +  \phi_{x}w ^ { 2 } + w^{2}w_{x} - \bm{\dot}{x_0}\phi_{x}w + v_{y}w)|x| +  \iint (( \Delta w ) w + \nu w _ {xxx} w)|x| \\ 
& + \iint ((\Delta v)v - vw_y)|x|   \\
= &\iint (\Delta w) w|x| + \iint (\Delta v) v|x|+ \bm{\dot}{x_0}\iint \phi_xw|x| - \iint \phi_xw^2|x| \\
&- \iint \phi w_x w|x| - \iint w_x w^2|x| + \nu \iint w_{xxx}w|x|.
\end{align*}
After integrating $\displaystyle \iint \phi w_xw|x|$ by parts in $x$, 
\begin{equation}\label{e:energy|x|}
\begin{split}
        \frac{1}{2} \frac{d}{dt} \iint (w^2 + v^2)|x| -& \half \iint \phi w^2 \sgn(x)\\
        =& \iint ((\Delta w) w+ (\Delta v) v)|x|+ \bm{\dot}{x_0}\iint \phi_xw|x|  \\
& - \half\iint \phi_xw^2|x| - \iint w_x w^2|x| + \nu \iint w_{xxx}w|x|.
    \end{split}
\end{equation}
Since $-\phi(x)\sgn(x) > 0$ \cite{BS1985} and since $-\phi(x)\sgn(x) \to 1$ as $|x| \to \infty$, by \eqref{e:KdVB0}, there must exist $A > 0$ such that $-\phi(x)\sgn(x) > \frac{1}{2}$ for $|x| > A$. Accordingly,
\begin{align*}
- \frac{1}{2} \iint \phi w^2 \sgn(x) > -\frac{1}{2}\iint_{|x|>A} \phi w^2 \sgn(x) >& \frac{1}{4}\iint_{|x|>A} w^2 \\
=&\frac{1}{4}\iint w^2 - \frac{1}{4}\iint_{|x| < A}w^2 \\  \geq & \frac{1}{4}\iint w^2 - \frac{1}{4}\lv 1_{[-A,A]}\rv_{L^{5/3}}\lv w^2 \rv_{L^{5/2}} \\
\geq & \frac{1}{4}\iint w^2  - C \lv w \rv_{L^5}^2
\end{align*}
for some constant $C>0$. In other words, we replace $\displaystyle - \frac{1}{2} \iint \phi w^2 \sgn(x)$ on the left hand side of \eqref{e:energy|x|} by $\displaystyle \frac{1}{4}\iint w^2 $ with the error term proportional to $ \lv w \rv_{L^5}^2$, which we will address later. 

We then handle the remaining terms on the right hand side of \eqref{e:energy|x|}, using where necessary integration by parts in $x$ and H{\"o}lder's inequality. We collect our result: 
\begin{align*}
&\iint (\Delta w)w|x| = -\iint (w_x^2+ w_y^2)|x| -\iint w_x w \sgn(x) \leq \lv \nabla w \rv_{L^2} \lv w \rv_{L^2}, \\
&\iint (\Delta v)v|x| = -\iint (v_x^2+ v_y^2)|x| -\iint v_x v \sgn(x) \leq \lv \nabla v \rv_{L^2} \lv v\rv_{L^2}, \\
&\bm{\dot}{x_0}\iint \phi_xw|x| \leq |\bm{\dot}{x_0}|\lv \phi_x |x|\rv_{L^2} \lv w\rv_{L^2} \leq C|\bm{\dot}{x_0}|\lv w\rv_{L^2} ,\\
&-\frac{1}{2} \iint \phi_x w^2 |x| \leq \lv \phi_x |x|\rv_{L^{5/3}} \lv w^2\rv_{L^{5/2}}  \leq C\lv w\rv_{L^5}^2,\\
&\iint w_xw^2|x| = \frac{1}{3}\iint w^3\sgn(x) \leq \lv w\rv_{L^3}^3 \leq  \lv w \rv^{2/3}_{L^2} \lv w \rv^{7/3}_{L^5} \leq C\lv w \rv^{7/3}_{L^5} , 
\intertext{and}
&\begin{aligned}\iint w_{xxx}w|x| =& \frac{1}{2}\iint (w_x)^2\sgn(x) - \iint w_{xx}w\sgn(x) \\
\leq& C\int_{-L}^L|w(0,y,t)||w_x(0,y,t)|~dy + C\lv \nabla w \rv_{L^2}^2 ,\end{aligned} 
\end{align*}
where $C>0$ is a constant. Putting these together, 
\begin{equation}\label{e:est_withL5}
\begin{split}
\frac{d}{dt}\iint (w^2 +& v^2)|x| + \frac{1}{4}\iint w^2 \\
\leq &C_1 \bigg( |\bm{\dot}{x_0}|\lv w\rv_{L^2} + \int_{-L}^L|w(0,y,t)||w_x(0,y,t)|~dy \\
&\qquad + \lv \nabla w \rv_{L^2}^2 + \lv \nabla w \rv_{L^2}\lv w \rv_{L^2}  + \lv \nabla v \rv_{L^2} \lv v\rv_{L^2} + \lv w\rv_{L^5}^2 + \lv w\rv_{L^5}^{7/3}\bigg)
\end{split}
\end{equation}
for some constant $ C_1>0$.

Proposition \ref{prop:G-N_strip} applies to $\lv w\rv_{L^5}$ to yield: 
\begin{equation}\label{e:GN_fromL5toL2}
\begin{split}
&\Vert w \Vert_{L^5}^2 \leq C(2R+1)\Vert \nabla  w \Vert_{L^2}^{6/5}\lv w\rv_{L^2}^{4/5} + C\frac{(2R+1)^{2/5}}{R^{3/5}} \Vert  w \Vert_{L^2}^2,
\\
&\Vert w \Vert_{L^5}^{7/3} \leq C(2R+1)\Vert \nabla  w \Vert_{L^2}^{7/5}\lv w\rv_{L^2}^{14/15} + C\frac{(2R+1)^{7/15}}{R^{7/10}} \Vert  w \Vert_{L^2}^{7/3},
\end{split}
\end{equation}
for some constant $C>0$ for any $R > 0$. We fix $R_0 > 0$  sufficiently large such that 
\[ 
C_1\left(C\frac{(2R_0+1)^{7/15}}{R_0^{7/10}}\lv w_0 \rv_{L^2}^{1/3} + C\frac{(2R_0+1)^{2/5}}{R_0^{3/5}}\right) < \frac{1}{8}.
\] 
Substituting \eqref{e:GN_fromL5toL2} into \eqref{e:est_withL5} and  subtracting $\lv w \rv_{L^2}^2$, we then arrive at
\begin{align*}
\frac{d}{dt}&\iint (w^2 + v^2)|x|+ \frac{1}{8}\iint w^2 \\
&\begin{aligned}\leq C\bigg( &|\bm{\dot}{x_0}|\lv w\rv_{L^2} + \int_{-L}^L|w(0,y,t)||w_x(0,y,t)|~dy + \lv \nabla w \rv_{L^2}^2\\
& + \lv \nabla w \rv_{L^2}\lv w \rv_{L^2} + \lv \nabla v \rv_{L^2} \lv v\rv_{L^2} +  \Vert \nabla  w \Vert_{L^2}^{6/5}\lv w\rv_{L^2}^{4/5} +  \Vert \nabla  w \Vert_{L^2}^{7/5}\lv w\rv_{L^2}^{14/15}\bigg)\end{aligned} \\
&\begin{aligned}\leq C\bigg(& |\bm{\dot}{x_0}|\lv w\rv_{L^2} + \int_{-L}^L|w(0,y,t)||w_x(0,y,t)|~dy + \lv \nabla w \rv_{L^2}^2\\
&+ \lv \nabla w \rv_{L^2}\lv w \rv_{L^2} + \lv \nabla v \rv_{L^2} \lv v\rv_{L^2} +  \Vert \nabla  w \Vert_{L^2}^{6/5}\lv w\rv_{L^2}^{4/5} +  \Vert \nabla  w \Vert_{L^2}^{7/5}\lv w\rv_{L^2}^{3/5}\bigg)\end{aligned} 
\end{align*}
for some constant $C>0$.
We emphasize that each term on the right hand side contains a factor that is integrable in $t$, except for $\displaystyle \int_{-L}^L|w(0,y,t)||w_x(0,y,t)|~dy$, which comes from a boundary term at $x=0$ upon integration by parts. We will devise an averaging argument to turn this pointwise norm into an integral.

\paragraph{Averaging.} 

A direct estimate of  $\displaystyle \int_{-L}^L|w(0,y,t)||w_x(0,y,t)|~dy$ would require an $L^\infty$ control of $w$ or $\nabla w$, which is unfortunately unavailable. On the other hand, the point $x=0$ is not distinguished. Indeed, repeating the above weighted estimate for $\iint (w^2 + v^2)|x-a| $ for $a \in (-1,1)$, we arrive at
\begin{equation*}\label{e:avg1}
\begin{split}
\frac{d}{dt}&\iint (w^2 + v^2)|x-a| + \frac{1}{8}\iint w^2 \\
&\begin{aligned}\leq C\big(& |\bm{\dot}{x_0}|\lv w\rv_{L^2} + |w(a,y,t)||w_x(a,y,t)| + \lv \nabla w \rv_{L^2}^2 \\
&+ \lv \nabla w \rv_{L^2}\lv w \rv_{L^2} + \lv \nabla v \rv_{L^2} \lv v\rv_{L^2}+  \Vert \nabla  w \Vert_{L^2}^{6/5}\lv w\rv_{L^2}^{4/5} +  \Vert \nabla  w \Vert_{L^2}^{7/5}\lv w\rv_{L^2}^{3/5} \big)\end{aligned} 
\end{split}
\end{equation*}
for some constant $C>0$. Integrating this over $(0,t)$,
\begin{align*}
\iint & (w^2 + v^2)|x-a|-\iint (w_0^2 + v_0^2)|x-a| + \frac{1}{8}\int_0^t\iint w^2 \\
\leq & C\int_0^t \bigg(|\bm{\dot}{x_0}|\lv w(s)\rv_{L^2}+\int_{-L}^L|w(a,y,s)||w_x(a,y,s)|~dy \bigg)~ds \\
&+C\int_0^t\big(\lv \nabla w(s) \rv_{L^2}\lv w(s) \rv_{L^2}
+  \lv \nabla v(s) \rv_{L^2} \lv v(s)\rv_{L^2} \big)~ds\\
&+ C\int_0^t \big( \lv \nabla w(s) \rv_{L^2}^2  + \Vert \nabla  w(s) \Vert_{L^2}^{6/5}\lv w(s)\rv_{L^2}^{4/5} +  \Vert \nabla  w (s)\Vert_{L^2}^{7/5}\lv w(s)\rv_{L^2}^{3/5}\big)~ds . 
\end{align*}
Integrating this over $(-1,1)$ with respect to $a$, and using 
\[
\int_{-1}^1|x-a|~da = \begin{cases} 2|x|, &|x| > 1 \\x^2 + 1, &|x| \leq 1\end{cases}\  \in \ (2|x|, 2|x|+1),
\]

we ultimately arrive at
\begin{align*}
\iint &(w^2 + v^2)|x|-\iint (w_0^2 + v_0^2)(2|x|+1) + \frac{1}{4}\int_0^t\iint w^2 \\
\leq &C\bigg(\int_0^t |\bm{\dot}{x_0}|\lv w(s)\rv_{L^2}~ds+\int_{-1}^1\int_{-L}^L\int_0^t  |w(a,y,s)||w_x(a,y,s)|~dsdyda\bigg)\\
&+C\int_0^t\big(\lv \nabla w(s) \rv_{L^2}\lv w(s) \rv_{L^2} 
 +  \lv \nabla v(s) \rv_{L^2} \lv v(s)\rv_{L^2}\big)~ds\\
&+ C\int_0^t \big( \lv \nabla w(s) \rv_{L^2}^2  + \Vert \nabla  w(s) \Vert_{L^2}^{6/5}\lv w(s)\rv_{L^2}^{4/5} +  \Vert \nabla  w (s)\Vert_{L^2}^{7/5}\lv w(s)\rv_{L^2}^{3/5}\big)~ds . 
\end{align*}

We now bound the right hand side in terms of time-integrable quantities established earlier in Section \ref{s:stab} and, particularly, in terms of 
\[
\int_0^t \lv \nabla w(s)\rv_{L^2}^2~ds, \quad \int_0^t \lv \nabla v(s)\rv_{L^2}^2~ds,\quad\text{and}\quad \int_0^t|\bm{\dot}{x_0}(s)|^2~ds
\]
(see \eqref{e:gradinL2}), using H{\"o}lder's inequality. We collect our result: 
\begin{align*}
&\int_0^t \lv \nabla w(s)\rv_{L^2}^2~ds \leq  C ,\\
&\int_0^t|\bm{\dot}{x_0}|\lv w(s)\rv_{L^2}~ds \leq \Big(\int_0^t|\bm{\dot}{x_0}(s)|^2~ds\Big)^{1/2} \Big(\int_0^t \lv w(s)\rv_{L^2}^2~ds\Big)^{1/2} \leq C \lv w \rv_{L^2_{t,x,y}}   ,\\
&\,\begin{aligned}\int_0^t  \lv \nabla w(s) \rv_{L^2}&\lv w(s) \rv_{L^2}~ds \\ &\leq  \Big(\int_0^t \lv \nabla w(s)\rv_{L^2}^2~ds\Big)^{1/2} \Big(\int_0^t \lv w(s)\rv_{L^2}^2~ds\Big)^{1/2} \leq C \lv w \rv_{L^2_{t,x,y}},\end{aligned} \\
&\,\begin{aligned}\int_0^t  \lv \nabla v(s) \rv_{L^2}&\lv v(s) \rv_{L^2}~ds \\ &\leq  \Big(\int_0^t \lv \nabla v(s)\rv_{L^2}^2~ds\Big)^{1/2} \Big(\int_0^t \lv v(s)\rv_{L^2}^2~ds\Big)^{1/2} \leq C \lv v \rv_{L^2_{t,x,y}},\end{aligned}\\
&\,\begin{aligned}\int_0^t \Vert \nabla  w(s) \Vert_{L^2}^{6/5}&\lv w(s)\rv_{L^2}^{4/5}~ds  \\ & \leq  \Big(\int_0^t|\bm{\dot}{x_0}(s)|^2~ds\Big)^{3/5} \Big(\int_0^t \lv w(s)\rv_{L^2}^2~ds\Big)^{2/5} \leq C \lv w \rv_{L^2_{t,x,y}}^{4/5},\end{aligned} \\
&\,\begin{aligned}\int_0^t \Vert \nabla  w(s) \Vert_{L^2}^{7/5}&\lv w(s)\rv_{L^2}^{7/5}~ds  \\
& \leq  \Big(\int_0^t|\bm{\dot}{x_0}(s)|^2~ds\Big)^{7/10} \Big(\int_0^t \lv w(s)\rv_{L^2}^2~ds\Big)^{3/10} \leq C \lv w \rv_{L^2_{t,x,y}}^{3/5} ,\end{aligned} 
\intertext{and}
&\,\begin{aligned}\int_{-1}^1\int_{-L}^L\int_0^t  & |w(a,y,s)||w_x(a,y,s)|~dsdyda \\
&\leq   \Big(\int_0^t \lv \nabla w(s)\rv_{L^2}^2~ds\Big)^{1/2} \Big(\int_0^t \lv w(s)\rv_{L^2}^2~ds\Big)^{1/2} \leq C \lv w \rv_{L^2_{t,x,y}},\end{aligned}
\end{align*}
where $\displaystyle \lv w \rv_{L^2_{t,x,y}} = \Big(\int_0^t \lv w(s) \rv_{L^2}^2~ds\Big)^{1/2}$. 

Therefore we conclude that 
\begin{equation}
    \iint (w^2 + v^2)|x| + \frac{1}{4}\lv w \rv_{L^2_{t,x,y}}^2 \leq C\big(\lv w \rv_{L^2_{t,x,y}}^{3/5} + \lv w \rv_{L^2_{t,x,y}}^{4/5}  + \lv w \rv_{L^2_{t,x,y}}+ \lv v \rv_{L^2_{t,x,y}}\big)  + C.
\end{equation}
Furthermore, it follows from Young's inequality that:
\begin{align*}
&C\lv w \rv_{L^2_{t,x,y}} \leq \frac{1}{32}\lv w \rv_{L^2_{t,x,y}}^2 + 8C^2  \leq \frac{1}{32}\lv w \rv_{L^2_{t,x,y}}^2 + C_1,\\
&C\lv w \rv_{L^2_{t,x,y}}^{4/5}  \leq \frac{1}{80}\lv w \rv_{L^2_{t,x,y}}^2 + \frac{3}{5}(4C)^{5/3} \leq \frac{1}{32}\lv w \rv_{L^2_{t,x,y}}^2 + C_1,\\
&C \lv w \rv_{L^2_{t,x,y}}^{3/5} \leq \frac{3}{10\cdot 2^{10}}\lv w \rv_{L^2_{t,x,y}}^2 + \frac{7}{10}(8C)^{10/7} \leq \frac{1}{32} \lv w \rv_{L^2_{t,x,y}}^2+ C_1,
\end{align*}
where $C_1>0$ is some constant. We then subtract the powers of $\lv w \rv_{L^2_{t,x,y}}$ from the right hand side over to the left, which yields
\begin{equation}\label{e:bound_withV}
 \iint (w^2 + v^2)|x| + \frac{1}{8}\lv w \rv_{L^2_{t,x,y}}^2 \leq C\lv v \rv_{L^2_{t,x,y}}  + C.     
\end{equation}
Since $\lv \nabla v \rv_{L^2_{t,x,y}}$ is bounded, it follows from the Poincar{\'e} inequality that $\lv v \rv_{L^2_{t,x,y}}$ is bounded. Therefore, \eqref{e:bound_withV} ultimately becomes 
\begin{equation}\label{e:bound_noV}
 \iint (w^2 + v^2)|x| + \frac{1}{8}\lv w \rv_{L^2_{t,x,y}}^2 \leq C.  
\end{equation}

Since $\lv w(t) \rv_{L^2}^2 + \lv v(t)\rv_{L^2}^2$ is decreasing and integrable in $t$, by \eqref{e:L2bounded}, an integration yields
\[
t(\lv v \rv_{L^2}^2 + \lv w\rv_{L^2}^2) \leq \int_0^t(\lv v(s) \rv_{L^2}^2 + \lv w(s)\rv_{L^2}^2)~ds \leq \lv w \rv_{L^2_{t,x,y}}^2+\lv v \rv_{L^2_{t,x,y}}^2 \leq C.
\]
Consequently,
\begin{equation}\label{e:L2_decay}
\lv w(t)\rv_{L^2}+\lv v(t)\rv_{L^2} \leq Ct^{-1/2}.
\end{equation}
We therefore establish boundedness and decay in $L^2$. 

When $1 < p < 2$, a straightforward calculation yields: 
\begin{equation}
\begin{split}\label{e:Lp_unif_bound}
    \iint |w(x,y)|^p &= \iint_{|x|<1}|w(x,y)|^p + \iint_{|x|\geq1}|w(x,y)|^p \\
    &\leq C \lv w \rv_{L^2}^p + \Big(\iint_{|x|\geq1}w^2(x,y)|x| \Big)^{p/2}\Big(\iint_{|x|\geq1}|x|^{-p/(2-p)} \Big)^{1-(p/2)} \\
    &\leq C(p)\Big(\iint_{|x|\geq1}w^2(x,y)|x| \Big)^{p/2},
    \end{split}
\end{equation}
where the right hand side is bounded by \eqref{e:bound_noV} uniformly in time. The same argument applies to $v$.  We pause to remark that $C(p) \to \infty$ as $p \to 1$ and, hence, this is not valid for $p = 1$. 

Finally, we combine \eqref{e:Lp_unif_bound} and \eqref{e:L2_decay} and use interpolation inequalities for $L^p$ spaces to show that 
for every $\delta > 0$ there exists $C(\delta)$ such that 
\[
\lv w(t) \rv_{L^p}+\lv v(t) \rv_{L^p} \leq Ct^{-(1-1/p)+\delta} \quad \text{for $p\in (1,2)$}. 
\]
We therefore establish boundedness and decay in $L^p$ for $p\in (1,2]$. 

This completes the proof.
\end{proof}

\section{Concluding remarks}

We have established global asymptotic stability of one-dimensional KdVB fronts in a weakly two-dimensional setting for sufficiently narrow channels, encompassing both monotone and non-monotone profiles. The restriction on the channel width is a technical requirement that ensures the auxiliary operator $\mathcal{H}$ (see \eqref{e:step1}) is positive semi-definite. It is therefore interesting to ask whether this restriction is necessary, or whether instability sets in when $L>L_{\max}$. When implementing direct numerical simulation, we see no qualitative difference between the cases $L<L_{\max}$ and $L>L_{\max}$, and the fronts appear stable for all tested widths. 
Stability for $L>L_{\max}$, and ultimately on the whole plane $L=\infty$, remains an open problem.

Another interesting extension is to remove the viscosity terms from the second equation in \eqref{e:Rajopadhye}, thereby returning to Rajopadhye's original formulation \cite{Raj1996}. Our numerical experiments suggest that the fronts remain stable in this case.

\subsection*{Acknowledgement} The authors acknowledge support through NSF grant DMS-2407358. 

\bibliographystyle{amsplain}
\bibliography{Front}

\end{document}